\newcommand{\ber}{\begin{eqnarray}}
\newcommand{\eer}{\end{eqnarray}}
\newtheorem{remark}{\noindent Remark}
\newcommand{\be}{\begin{equation}}
\newcommand{\ee}{\end{equation}}
\newcommand{\bal}{\begin{align}}
\newcommand{\eal}{\end{align}}
\newcommand{\balnonum}{\begin{align*}}
\newcommand{\ealnonum}{\end{align*}}
\def\qed{\hfill \vrule height1.3ex width1.2ex depth-0.1ex}
\newtheorem{problem}{\noindent Problem}
\newtheorem{theorem}{\noindent Theorem}
\newtheorem{lemma}{\noindent Lemma}
\newtheorem{conjecture}{\noindent Conjecture}
\newenvironment{proof}{\noindent{\bf Proof:}}
\newcommand{\indicatorpar}[1]{\textbf{1}_{\{#1\}}}
\newcommand{\indicator}[1]{\textbf{1}_{#1}}
\title{A correlation inequality for random points in a hypercube with some implications}
\date{\today}
\author{Royi Jacobovic and Or Zuk}
\affil{Department of Statistics and Data Science \\ The Hebrew University of Jerusalem \\
Mt. Scopus, 9190501, Jerusalem, Israel \\
Email: \href{royi.jacobovic@mail.huji.ac.il}{{\it royi.jacobovic@mail.huji.ac.il}}, \href{or.zuk@mail.huji.ac.il}{{\it or.zuk@mail.huji.ac.il}}}
\begin{document}
\maketitle

\abstract{Let $\prec$ be the product order on $\mathbb{R}^k$ and assume that $X_1,X_2,\ldots,X_n$ ($n\geq3$) are i.i.d. random vectors distributed uniformly in the unit hypercube $[0,1]^k$. Let $S$ be the (random) set of vectors in $\mathbb{R}^k$ that $\prec$-dominate all vectors in $\{X_3,..,X_n\}$, and let $W$ be the set of vectors that are not $\prec$-dominated by any vector in $\{X_3,..,X_n\}$. 
The main result of this work is the correlation inequality
\begin{equation*}
    P(X_2\in W|X_1\in W)\leq P(X_2\in W|X_1\in S)\,.
\end{equation*}
For every $1\leq i \leq n$ let $E_{i,n}$ be the event that $X_i$ is not $\prec$-dominated by any of the other vectors in $\{X_1,\ldots,X_n\}$. 
The main inequality yields an elementary proof for the result that the events $E_{1,n}$ and $E_{2,n}$ are asymptotically independent as $n\to\infty$. Furthermore, we derive a related combinatorial formula for the variance of the sum $\sum_{i=1}^n \indicator{E_{i,n}}$, i.e. the number of maxima under the product order $\prec$, and show that certain linear functionals of partial sums of $\{\indicator{E_{i,n}};1\leq i\leq n\}$ are asymptotically normal as $n\to\infty$.}

\section{Introduction}
For every positive integer $l \in \mathbb{N}$, denote $[l]\equiv\{1,2,\ldots.l\}$ and let $\prec$ be the product order on $\mathbb{R}^k$, \textit{i.e.,} for any two real vectors $x=(x_1,x_2,\ldots,x_k)$ and $y=(y_1,y_2,\ldots,y_k)$, 
\begin{equation}\label{eq: product order}
 x\prec y \Leftrightarrow \left( x_i\leq y_i\ ,\ \forall i\in[k]\right)\,.
\end{equation}
In addition, $\not\prec$ is the negation of $\prec$. 

Let $X_1,X_2,\ldots,X_n$ ($n\geq3$) be i.i.d. random vectors distributed uniformly on $[0,1]^ k$ and define two random sets:
\begin{equation}
 W\equiv\left\{x\in\mathbb{R}^k; x\not\prec X_i,\forall 3\leq i\leq n\right\}\ \ , \ \ S\equiv\left\{x\in\mathbb{R}^k; X_i\prec x,\forall 3\leq i\leq n\right\}
\end{equation}
The main result of this work is the following correlation inequality
\be
\label{eq:main inequality}
 P(X_2\in W|X_1\in W)\leq P(X_2\in W|X_1\in S)\,.
\ee
Observe that $S\subset W$ with probability one, hence the conditioning on $X_1$ on the right-hand side above is more restrictive than the conditioning on the left-hand side. 

For each $i\in [n]$ the vector $X_i$ is said to be a maximum of $\textbf{X}_n\equiv\{X_1,X_2,\ldots,X_n\}$ if and only if no \textit{other} vector in $\textbf{X}_n$ $\prec$-dominates $X_i$. Since the product order $\prec$ is a partial order, the set of maxima of $\textbf{X}_n$ might include more than one element. Define the events
\be
 E_{i,n}\equiv\{X_i\text{ is a maximum of $\textbf{X}_n$}\}\ \ , \ \ 1\leq i\leq n\,. 
\ee
Then, the inequality \eqref{eq:main inequality} is used in order to provide an elementary proof for the result that $E_{1,n}$ and $E_{2,n}$ are asymptotically independent as $n\to\infty$. This asymptotic independence result is applied in order to derive a central limit theorem (CLT) for certain linear functionals of partial sums of $\{\indicator{E_{i,n}};i\in[n]\}$.

The rest of this work is organized as follows: Section \ref{sec: correlation} is about the correlation inequality \eqref{eq:main inequality}. In particular, it includes the proof as well as a discussion about a conjectured extension of \eqref{eq:main inequality}. Section \ref{sec:related results} contains the asymptotic independence and the CLT statements along with their proofs.

\subsection{Related literature}
\label{sec:literature}
Correlation inequalities are inequalities satisfied by a correlation function in a probabilistic model. Several well-known correlation inequalities arise in diverse fields of probability theory: In models originating from statistical mechanics \cite{griffiths1967correlations,ginibre1970general} and percolation theory \cite{van1985inequalities}. For the Gaussian case, an inequality was suggested in \cite{pitt1977gaussian}, with some applications, \textit{e.g.,} \cite{li1999gaussian,shao2003gaussian}. 
Additional correlation inequalities appear in \cite{karlin1980classes1,karlin1980classes2,karlin1981total,rinott1993correlation}. 
For example, consider the FKG inequality \cite{fortuin1971correlation} which has several generalizations and applications in probability, \textit{e.g.,} \cite{barbato2005fkg,fishburn1988match,last2011poisson,shepp1982xyz,tassion2016crossing,teixeira2009interlacement,van1992lru}. 

Of particular relevance to the current work are correlation inequalities related to linear extensions of finite partially ordered sets. In this settings, there is a set with $\ell$ elements $\{U_1,U_2,\ldots,U_\ell\}$ which is equipped with a partial order $\prec_1$. With a bit abuse of notation, we may represent $\prec_1$ as the set of all $(i,j)\in[\ell]\times[\ell]$ such that $U_i\prec_1U_j$. Let $\prec_2$ be a random linear extension of $\prec_1$ which is distributed uniformly over the set of all linear extensions of $\prec_1$. In this probability space, for every $I\subseteq[\ell]\times[\ell]$, define an ordering event $E_I\equiv\{U_i\prec_2 U_j,\forall(i,j)\in I\}$. 
Several inequalities were proposed for co-occurrence of ordering events, i.e., $P(E_A | E_B) \geq P(E_A)$ for some choices of $A,B\subseteq[\ell]\times[\ell]$.
Equivalently, we can assume $n$ i.i.d. continuous uniform random variables $U_1,U_2,\ldots,U_n \sim U(0,1)$. Denote for every $I\subseteq[\ell]\times[\ell]$ the event $\widetilde{E}_I\equiv\left\{U_i<U_j,\forall(i,j)\in I\right\}$. Then, the inequality is expressed as $P(\widetilde{E}_A | \widetilde{E}_B\cap \widetilde{E}_{\prec_1}) \geq P(\widetilde{E}_A | \widetilde{E}_{\prec_1})$.
For example, consider the XYZ inequality \cite{shepp1982xyz,fishburn1984correlational}, which is related to the FKG inequality and can be phrased as follows:
\begin{theorem} (XYZ-inequality)
	\label{thm:XYZ}
	Let $\prec_1$ be a subset of $[\ell]\times[\ell]$ which is representing a partial order. Then: 
	\be
	P(U_1 < U_2 | \widetilde{E}_{\prec_1}) \leq P(U_1 < U_2 | \widetilde{E}_{\prec_1}\cap\{U_1 < U_3\}).
	\label{eq:XYZ}
	\ee
\end{theorem}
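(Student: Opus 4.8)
The plan is to read \eqref{eq:XYZ} as a positive‑correlation statement under the measure $P(\cdot\mid\widetilde{E}_{\prec_1})$, which samples a uniform linear extension of $\prec_1$. Writing $B=\widetilde{E}_{\prec_1}$ and clearing the denominator, \eqref{eq:XYZ} is equivalent to $P(U_1<U_2\mid B)\,P(U_1<U_3\mid B)\le P(U_1<U_2,\,U_1<U_3\mid B)$, i.e. to $\operatorname{Cov}_B\!\big(\indicator{U_1<U_2},\indicator{U_1<U_3}\big)\ge 0$. (If $\prec_1$ forces or forbids either relation the claim is trivial, so I may assume that $1,2$ are incomparable and $1,3$ are incomparable in $\prec_1$.) The obstruction to invoking the FKG inequality \cite{fortuin1971correlation} directly is that neither $\indicator{U_1<U_2}$ nor $\indicator{U_1<U_3}$ is monotone in the coordinatewise order on $[0,1]^\ell$, since each depends on the shared variable $U_1$ with the wrong sign.

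First I would remove this obstruction by conditioning on $U_1$. Fix $U_1=t$; then the two events become $\{U_2>t\}$ and $\{U_3>t\}$, which are (coordinatewise) increasing functions of $(U_2,\ldots,U_\ell)$. The conditional density of $(U_2,\ldots,U_\ell)$ given $U_1=t$ and $B$ is proportional to the indicator of the region cut out by (i) the relations of $\prec_1$ not involving element $1$, each of the form $\{u_i\le u_j\}$, and (ii) the box constraints $\{u_j\ge t\}$ for $1\prec_1 j$ and $\{u_i\le t\}$ for $i\prec_1 1$. Each $\{u_i\le u_j\}$ is a sublattice of $[0,1]^{\ell-1}$, as are the half‑space box constraints, and a finite intersection of sublattices is a sublattice; hence the conditional law is uniform on a sublattice and is therefore log‑supermodular. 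The FKG inequality then yields, for every $t$,
\[
\mathbb{E}\big[\indicator{U_2>t}\,\indicator{U_3>t}\mid U_1=t,B\big]\ \ge\ F(t)\,G(t),
\]
where $F(t)=P(U_2>t\mid U_1=t,B)$ and $G(t)=P(U_3>t\mid U_1=t,B)$.

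Next I would integrate over $U_1$ through the decomposition
\[
\operatorname{Cov}_B\!\big(\indicator{U_1<U_2},\indicator{U_1<U_3}\big)=\mathbb{E}\big[\operatorname{Cov}\big(\indicator{U_1<U_2},\indicator{U_1<U_3}\mid U_1\big)\big]+\operatorname{Cov}\big(F(U_1),G(U_1)\big),
\]
where all expectations are taken under $P(\cdot\mid B)$. The first term is nonnegative by the previous step. It then suffices to show the second term is nonnegative, for which it is enough that $F$ and $G$ be both non‑increasing on $[0,1]$: two functions of the single variable $U_1$ that vary in the same direction have nonnegative covariance under any common law, by Chebyshev's association (one‑dimensional FKG) inequality.

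The monotonicity of $F$ is the main obstacle, and it is genuinely delicate because two competing effects compete: as $t$ grows the common threshold in $\{U_2>t\}$ rises, lowering $F$, while the box constraints tied to $U_1=t$ simultaneously relax or tighten and may push the conditional law of $U_2$ upward. Indeed $t\mapsto P(U_2>s\mid U_1=t,B)$ need not be monotone for fixed $s$, so one must argue that along the diagonal $s=t$ the threshold effect dominates and $F(t)=P(U_2>t\mid U_1=t,B)$ is non‑increasing. I would establish this by a direct comparison of the $(\ell-1)$‑dimensional slice volumes of the order polytope $\{u\in[0,1]^\ell:u_i\le u_j\ \forall\,i\prec_1 j\}$ at heights $U_1=t$ and $U_1=t'$ with $t<t'$, exhibiting a monotone (shift) coupling of the conditional laws of $U_2$ that witnesses $F(t)\ge F(t')$, with the identical argument applied to $G$; an induction on the number of elements comparable to $1$ is an alternative route. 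Once this monotonicity is in hand, both contributions in the decomposition are nonnegative, giving $\operatorname{Cov}_B\!\big(\indicator{U_1<U_2},\indicator{U_1<U_3}\big)\ge 0$, which is \eqref{eq:XYZ}.
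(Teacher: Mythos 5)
The paper itself does not prove Theorem \ref{thm:XYZ}; it is quoted from \cite{shepp1982xyz,fishburn1984correlational}, where the known proof applies the FKG inequality to a deliberately non-obvious lattice order (roughly, $x\preceq y$ iff $x_1\geq y_1$ and $x_i-x_1\leq y_i-y_1$ for $i\geq2$), under which \emph{both} events $\{U_1<U_2\}$ and $\{U_1<U_3\}$ become increasing and the set of linear extensions can be shown to satisfy the lattice condition. Your route is different, and its skeleton is sound as far as it goes: the reformulation as $\operatorname{Cov}_B\big(\indicator{U_1<U_2},\indicator{U_1<U_3}\big)\geq0$, the law of total covariance with respect to $U_1$, the observation that the conditional support given $U_1=t$ is a sublattice of $[0,1]^{\ell-1}$ (each constraint $\{u_i\leq u_j\}$ and each half-space $\{u_j\geq t\}$, $\{u_i\leq t\}$ is closed under coordinatewise $\vee,\wedge$), so that FKG/Harris gives nonnegativity of the conditional covariance term, and the reduction of the remaining term to co-monotonicity of $F$ and $G$ via the one-dimensional Chebyshev inequality are all correct.

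The proof nevertheless has a genuine gap, and it sits exactly where you flag it: the claim that $F(t)=P(U_2>t\mid U_1=t,\widetilde{E}_{\prec_1})$ is non-increasing is asserted, not proved. This is not a routine verification to be deferred --- it is the step into which the entire difficulty of the XYZ inequality has been pushed. As you yourself note, for fixed $s$ the map $t\mapsto P(U_2>s\mid U_1=t,\widetilde{E}_{\prec_1})$ need not be monotone, because raising $U_1$ can shift the conditional law of $U_2$ stochastically upward through elements comparable to both $1$ and $2$; so the desired monotonicity along the diagonal $s=t$ requires a quantitative argument that the threshold effect always dominates this upward drift, and none is given. The "direct comparison of slice volumes" and the "monotone (shift) coupling" are named but not constructed: such a coupling would have to relate the conditional laws $h_t$ and $h_{t'}$ of $U_2$ under two \emph{different} conditioning events while simultaneously moving the threshold from $t$ to $t'$, and it is not clear that a coupling with the required property exists for an arbitrary poset (simple examples, e.g.\ $r$ elements below both $1$ and $2$, give $F(t)=\tfrac{(r+1)(1-t)}{r+1-rt}$, which is decreasing but approaches the constant $1$ as $r\to\infty$, showing how close the two competing effects can come to balancing). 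Until this lemma is proved --- or the argument is replaced by Shepp's change of lattice, which sidesteps the issue entirely --- the proposal does not establish \eqref{eq:XYZ}.
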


While Eq. \eqref{eq:XYZ} seems intuitive, several natural generalizations of it do not hold in general (see \cite{shepp1982xyz,graham1983applications,shepp1980fkg}). 
For example, taking $\prec_1 \equiv \{ (1,3) \}$ and replacing $B=\{(1,3)\}$ by $B=\{(1,3),(3,4)\}$ leads to a false statement. 
Other inequalities were proposed in \cite{graham1980some,shepp1980fkg}, where $[\ell]$ is partitioned into two disjoint subsets $N_1,N_2$ such that $A$ and $B$ contain only pairs $(i,j)\in N_1\times N_2$, and either the restriction of $\prec_1$ to each of $N_1$ and $N_2$ is a (full) linear order, or $\prec = \prec_{1,1} \bigcup \prec_{1,2}$ such that for each $i=1,2$, $\prec_{1,i}$ is a partial order of $N_i$.

Additional details on related inequalities and counter-examples are available in \cite{correlationinequalities}. Another intriguing result is that although Theorem \ref{thm:XYZ} implies that $P(U_1 < U_2 < U_3) \leq P(U_1 < U_2) P(U_2 < U_3)$, it was shown in \cite{brightwell1985universal} that it may not hold that $P(U_1 < U_2 < U_3 < U_4) \leq P(U_1 < U_2 < U_3) P(U_3 < U_4)$. 
In the above inequalities, $A$ and $B$ are themselves subsets of $[\ell]\times[\ell]$ which are representing partial orders. The main result of the current work \eqref{eq:main inequality} goes beyond such inequalities, as it refers to more complex ordering events that cannot be represented as partial order events, but only as intersections of unions of pairwise ordering events. 

Another strand of relevant literature is about the number of maxima in a model with $n$ random points distributed uniformly in the unit hypercube $[0,1]^k$. This model is related to several problems in analysis of linear programming \cite{blair1986random} and of maxima-finding algorithms \cite{chen2012maxima, devroye1999note,dyer1998dominance,golin1994provably,tsai2003efficient}. Furthermore, it also appears in the context of game theory \cite{o1981number} and in the analysis of random forest algorithms \cite{biau2016random,scornet2015consistency}. 
Theoretical results regarding this model appear in, \textit{e.g.,} \cite{bai1998variance,bai2005maxima,barbour2001number,barndorff1966distribution,baryshnikov2000supporting}. In particular, as to be explained in Section \ref{sec:asymptotic independence}, the asymptotic independence of $E_{1,n}$ and $E_{2,n}$ as $n\to\infty$ might be derived as a consequence of the results in this literature. Remarkably, \cite{bai2005maxima} includes a result regarding asymptotic normality with the corresponding Berry-Essen bound of the number of maxima $\sum\limits_{i=1}^n \indicator{E_{i,n}}$. In the current paper, we derive different asymptotic normality results, for certain linear functionals of partial sums of $\{\indicator{E_{i,n}};i\in[n]\}$. The derivation will follow via an application of some limit theorems for triangular arrays of row-wise exchangeable random variables \cite{weber1980martingale}.

For simplicity, we present our results for the case where all coordinates have a continuous uniform marginal distribution. Since all events of interest can be expressed in terms of the ranks of the corresponding random variables, the major results of the current work still hold when the marginal $U(0,1)$ distribution is replaced by any continuous distribution. For a recent study about the effect of discrete marginal distributions implying non-zero probability for ties, see \cite{jacobovic2021phase}. 

\section{A correlation inequality}\label{sec: correlation}
For every $i, j\in [n]$ denote $B_{ij} \equiv\{ X_i \prec X_j \}$ and $\overline{B_{ij}}\equiv\{ X_i \not\prec X_j \}$. Then, Eq. \eqref{eq:main inequality} can be rephrased as follows:
\begin{theorem} \label{thm:dominate_conditioning_inequality}
\be
P\left(\bigcap_{j=3}^n \overline{B_{2j}} \bigg| \bigcap_{j=3}^n \overline{B_{1j}}\right) \leq 
P\left(\bigcap_{j=3}^n \overline{B_{2j}} \bigg| \bigcap_{j=3}^n B_{j1}\right)\,. 
\label{eq:dominate_conditioning_inequality}
\ee
\end{theorem}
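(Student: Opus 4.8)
The plan is to condition on the set $\{X_3,\ldots,X_n\}$ and reduce the inequality to a pointwise comparison. Write $Y \equiv \{X_3,\ldots,X_n\}$; conditionally on $Y$, the sets $W$ and $S$ become deterministic, and $X_1, X_2$ are independent uniform vectors on $[0,1]^k$. Both sides of \eqref{eq:dominate_conditioning_inequality} are then ratios of probabilities involving only the marginal geometry of $W$ and $S$ relative to the fixed point cloud $Y$.

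\medskip

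Let me set up the key quantities. Define the random variables
\be
V \equiv P(X_2 \in W \mid Y), \qquad w \equiv P(X_1 \in W \mid Y), \qquad s \equiv P(X_1 \in S \mid Y),
\ee
where all of these are functions of $Y$ alone. Note $V$ and $w$ are equal in distribution (both equal $\mathrm{vol}(W)$), and $s = \mathrm{vol}(S)$. After integrating out $Y$, the left-hand side of \eqref{eq:dominate_conditioning_inequality} is $E[V w]/E[w]$ and the right-hand side is $E[V s]/E[s]$, since $X_2 \in W$ and the conditioning event on $X_1$ are conditionally independent given $Y$. The inequality to prove thus becomes
\be
\frac{E[V w]}{E[w]} \leq \frac{E[V s]}{E[s]}, \qquad \text{equivalently} \qquad E[Vw]\,E[s] \leq E[Vs]\,E[w].
\ee
Since $V$ and $w$ have the same distribution and $s \leq w$ pointwise (because $S \subseteq W$), this is a statement comparing how the ``large'' functional $V$ correlates with the two volume statistics $w$ and $s$. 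The natural route is to establish a covariance-type inequality of FKG flavor: one wants $V$ to be more positively associated with $s$ than with $w$, reflecting that the event $X_1 \in S$ forces $Y$ to lie in a corner (all of $Y$ below $X_1$), which in turn enlarges $W$ and hence increases $V = \mathrm{vol}(W)$.

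\medskip

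The cleanest path is to exploit monotonicity in the individual points of $Y$. Fix all of $Y$ except one point $X_j$, and examine how $\mathrm{vol}(W)$, $w$, and $s$ depend on $X_j$: increasing a coordinate of $X_j$ shrinks $W$ (more points are dominated), shrinks $S$, and the indicator structure makes $\mathrm{vol}(W)$ monotone decreasing as $X_j$ moves up in the product order. The strategy is to prove the inequality by a one-point-at-a-time coupling or a telescoping argument, showing at each step that replacing the unconditional law of a point by the law conditioned on $X_1 \in S$ (which biases the point downward) raises $E[V \cdot \mathbf{1}]$ proportionally more than it raises the normalizing mass. Concretely, I would try to write both sides as iterated integrals over $X_3,\ldots,X_n$ and apply a Chebyshev/FKG-type rearrangement coordinatewise, using that all the relevant events are monotone (either up-sets or down-sets) in each $X_j$ under $\prec$.

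\medskip

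The main obstacle is that the functionals involved are \emph{not} simple monotone functions of a single real parameter: $\mathrm{vol}(W)$ is a monotone set function of the whole configuration, but $w$ and $s$ mix an up-set condition (for $S$) with a down-set condition (for $W$), so a naive FKG application can push the inequality the wrong way. The delicate point will be to correctly align the directions of monotonicity — the event $\{X_1 \in S\}$ is an up-set in $X_1$ but imposes a down-set constraint on each $X_j \in Y$, whereas $\{X_1 \in W\}$ imposes a weaker (up-set) constraint on each $X_j$. Getting the FKG lattice structure and the sign of the resulting correlation exactly right, rather than off by a direction, is where the real work lies; I expect the proof to hinge on a careful coordinatewise conditioning argument that isolates the asymmetry between the ``not-dominated-by'' constraint defining $W$ and the ``dominates-all'' constraint defining $S$.
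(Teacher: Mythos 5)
Your reformulation is correct: conditioning on $Y=\{X_3,\ldots,X_n\}$ and using the conditional independence of $X_1,X_2$ given $Y$, the claim is equivalent to $E[V^2]\,E[s]\le E[Vs]\,E[V]$ with $V=\mathrm{vol}(W\cap[0,1]^k)$ and $s=\mathrm{vol}(S\cap[0,1]^k)$. But this is where the proposal stops: the inequality itself is never proved, and the tool you name cannot supply it in the form you suggest. Both $V$ and $s$ are coordinatewise \emph{decreasing} functions of the configuration $Y$ under the product order, so Harris/FKG gives $E[Vs]\ge E[V]E[s]$ and $E[V^2]\ge E[V]^2$ --- and both of these point in a direction that cannot close the argument, since chaining them would additionally require $E[V]^2\ge E[V^2]$. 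What is actually needed is the normalized-covariance comparison $\mathrm{Cov}(V,s)/E[s]\ge \mathrm{Var}(V)/E[V]$, i.e.\ that size-biasing the law of $Y$ by $s$ inflates $E[V]$ at least as much as size-biasing by $V$ itself; no off-the-shelf positive-association statement delivers this, and you acknowledge as much (``where the real work lies'') without resolving it. So the heart of the proof is missing.

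The paper conditions in the opposite direction, on $(X_1,X_2)=(x_1,x_2)$, and this is precisely the move that makes the problem tractable: given $(x_1,x_2)$ the events indexed by $j=3,\ldots,n$ are i.i.d., so all four probabilities in the cross-multiplied inequality factor as $(n-2)$-th powers of single-point quantities, and the whole theorem reduces to the case of one auxiliary point $X_3$. That case is an explicit inequality between polynomials in $(x_1,x_2)$, namely $G(x_2;x_1)\ge 0$ in Eq.~\eqref{eq:G_volume_x1_x2}, proved by multilinearity in the coordinates of $x_2$: the minimum over the box $\bigtimes_{i=1}^k[0,x_{1i}]$ is attained at a vertex, and the vertices are handled by induction on the number of zero coordinates. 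Your conditioning on $Y$ forfeits exactly this factorization --- $\mathrm{vol}(W)$ is a genuine $(n-2)$-body functional with no product structure --- so even with the correct correlation inequality identified, you would be left proving a statement considerably harder than the one-point inequality the paper reduces to.
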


\begin{proof}
Let $x_1, x_2 \in \mathbb[0,1]^k$ be the values of $X_1,X_2$ respectively, and denote $x_{[2]}\equiv[x_1,x_2]$ such that for every $i=1,2$, $x_i=(x_{i1},x_{i2},\ldots,x_{ik})$. It will be shown that for every fixed $x_1,x_2 \in [0,1]^k$ 
\be \label{eq:conditioning_x12}
P(\bigcap_{j=3}^n \{ \overline{B_{2j}} \cap \overline{B_{1j}} \} | x_{[2]}) P(\bigcap_{j=3}^n B_{j1} | x_{[2]}) \leq 
P(\bigcap_{j=3}^n \{ \overline{B_{2j}} \cap B_{j1} \} | x_{[2]})
P(\bigcap_{j=3}^n \overline{B_{1j}} | x_{[2]}) 
\ee
and the theorem will follow by integration over $x_{[2]}$. Due to the conditioning on $x_{[2]}$, we can write the above inequality as
\be
\prod_{j=3}^n P( \{ \overline{B_{2j}} \cap \overline{B_{1j}} \} | x_{[2]}) P( B_{j1} | x_{[2]}) \leq 
\prod_{j=3}^n P( \{ \overline{B_{2j}} \cap B_{j1} \} | x_{[2]})
P(\overline{B_{1j}} | x_{[2]}) 
\ee
and by symmetry 
\be
\Big[P( \{ \overline{B_{23}} \cap \overline{B_{13}} \} | x_{[2]}) P( B_{31} | x_{[2]})\Big]^{n-2} \leq 
\Big[P( \{ \overline{B_{23}} \cap B_{31} \} | x_{[2]})
P(\overline{B_{13}} | x_{[2]}) \Big]^{n-2}.
\ee
Hence, it is enough to prove: 
\be
P( \{ \overline{B_{23}} \cap \overline{B_{13}} \} | x_{[2]}) P( B_{31} | x_{[2]}) \leq
P( \{ \overline{B_{23}} \cap B_{31} \} | x_{[2]})
P(\overline{B_{13}} | x_{[2]}).
\label{eq:inequality_one_vec}
\ee
In order to compute the probabilities of each of the events above, observe that: 
\begin{align}
P(B_{31} | x_{[2]}) &= \prod_{i=1}^k x_{1i} \: , \\
P(\overline{B_{13}} | x_{[2]}) &= 1 - \prod_{i=1}^k (1-x_{1i}) \: , \nonumber \\
P( \overline{B_{23}} \cap \overline{B_{13}} | x_{[2]}) &= 1 - P( B_{23} \cup B_{13} | x_{[2]}) \nonumber \\
&= 1 - P( B_{23}| x_{[2]}) - P( B_{13}| x_{[2]}) + P( B_{23} \cap B_{13} | x_{[2]}) \nonumber \\
&= 1 - \prod_{i=1}^k (1-x_{1i}) - \prod_{i=1}^k (1-x_{2i}) + \prod_{i=1}^k (1-\max\{x_{1i},x_{2i}\}) \: , \nonumber \\
P( \overline{B_{23}} \cap B_{31} | x_{[2]}) &= P( B_{31} | x_{[2]}) - P( B_{23} \cap B_{31} | x_{[2]}) \nonumber \\
&= \prod_{i=1}^k x_{1i} - \indicatorpar{x_2\prec x_1} \prod_{i=1}^k (x_{1i}-x_{2i}).\nonumber
\label{eq:three_boxes_volumes}
\end{align}
Thus, an insertion of the probabilities from Eq. \eqref{eq:three_boxes_volumes} into Eq. \eqref{eq:inequality_one_vec} yields that it is remained to prove: 
\begin{align}
& \Big[ \prod_{i=1}^k x_{1i} - \indicatorpar{x_2 \prec x_1} \prod_{i=1}^k (x_{1i}-x_{2i}) \Big] \Big[1 - \prod_{i=1}^k (1-x_{1i}) \Big] \geq \nonumber \\ 
&\Big[ 1 - \prod_{i=1}^k (1-x_{1i}) - \prod_{i=1}^k (1-x_{2i}) + \prod_{i=1}^k (1-\max\{x_{1i}, x_{2i}\}) \Big] \prod_{i=1}^k x_{1i}.
\end{align}
Then, subtraction of the common terms from both sides implies that the following inequality has to be proven: 
\be
\indicatorpar{x_2\prec x_1} \prod_{i=1}^k (x_{1i}-x_{2i}) \Big[1 - \prod_{i=1}^k (1-x_{1i}) \Big] \leq \Big[\prod_{i=1}^k (1-x_{2i}) - \prod_{i=1}^k \big(1-\max \{x_{1i} ,x_{2i}\}\big) \Big] \prod_{i=1}^k x_{1i} .
\ee
When $ x_2\not\prec x_1$, the left-hand side is zero, and the inequality holds because the right-hand side is always non-negative. When $x_2\prec x_1$, it is left to show that:
\be
\label{eq:G_volume_x1_x2}
G(x_2 ; x_1) \equiv \Big[\prod_{i=1}^k (1-x_{2i}) - \prod_{i=1}^k (1-x_{1i}) \Big] \prod_{i=1}^k x_{1i} - \prod_{i=1}^k (x_{1i}-x_{2i}) \Big[1 - \prod_{i=1}^k (1-x_{1i}) \Big] \geq 0. 
\ee
In particular, $x_2\prec x_1$ implies that $x_2 \in \bigtimes\limits_{i=1}^k [0, x_{1i}]$. We proceed by first showing that $G(x_2 ; x_1) \geq 0$ on the boundary of the box $\bigtimes\limits_{i=1}^k [0, x_{1i}]$, and will later show that 
$G(x_2 ; x_1) \geq 0$ also in the interior. 
First, if $x_{2i} = x_{1i}$ for some $i\in [k]$, then
\be
G(x_2 ; x_1) = \Big[\prod_{i=1}^k (1-x_{2i}) - \prod_{i=1}^k (1-x_{1i}) \Big] \prod_{i=1}^k x_{1i} \geq 0
.
\label{eq:boundary_one}
\ee
Next, if $\textbf{0}_k$ is the zero-vector in $\mathbb{R}^k$, then 
\be
G(\bm{0_k} ; x_1) = \Big[1 - \prod_{i=1}^k (1-x_{1i}) \Big] \prod_{i=1}^k x_{1i} - \prod_{i=1}^k x_{1i} \Big[1 - \prod_{i=1}^k (1-x_{1i}) \Big] = 0.
\label{eq:boundary_zero}
\ee
For every $i \in [k]$ let $x_{2 [-i]} \equiv (x_{21},..,x_{2 (i-1)}, x_{2 (i+1)}, ..,x_{2 k})$ and define the real-valued function 
\be
G_{x_{2 [-i]}, x_1}(y) \equiv G\big((x_{21},..,x_{2 i-1}, y, x_{2 i+1}, ..,x_{2 k}) ; x_1\big)\ \ , \ \ y\in\mathbb{R}.
\label{eq:G_x_def}
\ee
Then, Eq. \eqref{eq:G_volume_x1_x2} implies that $G_{x_{2 [-i]},x_1}(\cdot)$ is a linear function of $y$. Therefore, its minimum in a closed interval is obtained at the boundary, \textit{i.e.}, for every $x_1 \in [0,1]^k$, $x_{2 [-i]} \in \bigtimes\limits_{j=1, j \neq i}^k [0, x_{1j}]$ and $y \in [0, x_{1i}]$ deduce that
\be
G_{x_{2 [-i]},x_1}(y) \geq \min\left\{G_{x_{2 [-i]},x_1}(0), G_{x_{2 [-i]},x_1}(x_{1i})\right\}.
\ee
In particular, Eqs. \eqref{eq:boundary_one} and \eqref{eq:boundary_zero} yield that 
\be
G_{\bm{0_{k-1}},x_1}(y) \geq \min \big(G_{\bm{0_{k-1}},x_1}(0), G_{\bm{0_{k-1}},x_1}(x_{1i}) \big) = 0
\ee
for every $y\in[0, x_{1i}]$. Thus, $G(x_2 ; x_1) \geq 0$ for every vector $x_2 \in \bigtimes\limits_{i=1}^k [0, x_{1i}]$ with at least $k-1$ zero coordinates. We next continue by induction. Let $x_{2 [-i]} \in \bigtimes\limits_{i=1}^k [0, x_{1i}]$ be a vector with $k-2$ zero coordinates. Then, since $G_{x_{2 [-i]},x_1}(0)$ is obtained by evaluating $G$ at a vector with $k-1$ zero coordinates, deduce that
\be
G_{x_{2 [-i]},x_1}(y) \geq \min \big(G_{x_{2 [-i]},x_1}(0), G_{x_{2 [-i]},x_1}(x_{1i}) \big) \geq 0
\ee
for every $y\in[0, x_{1i}]$. Proceeding by induction, we get that $G_{x_{2 [-i]},x_1}(x) \geq 0$ whenever $x_{2i} = 0$ for some $i\in [k]$.
As a result deduce that $G(x_2 ; x_1) \geq 0$ for any $x_2$ lying at the boundary of the box $\bigtimes\limits_{i=1}^k [0, x_{1i}]$. Next, for any $x_{2 [-i]} \in \bigtimes\limits_{j=1, j \neq i}^k [0, x_{1j}]$ and any $y \in [0, x_{1i}]$ we have: 
\be
G_{x_{2 [-i]},x_1}(y) \geq \min \big(G_{x_{2 [-i]},x_1}(0), G_{x_{2 [-i]},x_1}(x_{1i}) \big) \geq 0.
\ee
Therefore, $G(x_2 ; x_1) \geq 0$ for every $x_2 \in \bigtimes\limits_{i=1}^k [0, x_{1i}]$ which completes the proof. \qed \newline\newline
\end{proof}

\subsection{Conjecture} \label{sec:conjecture}
It is natural to consider generalizations the main result presented in Theorem \ref{thm:dominate_conditioning_inequality}. 
This theorem and several other correlation inequalities can be represented as special cases of the following settings: 

\begin{problem}
	\label{conj:DNF_CNF} 
	Consider $n$ i.i.d. uniform random variables $U_1,U_2,\ldots,U_\ell \sim U(0,1)$. For every $(i,j)\in[\ell]\times[\ell]$ denote $E_{ij} \equiv \{U_i < U_j\}$. Find the most general conditions on $T,S\geq 1$,
	$F_1,F_2..,F_T, H_1,H_2 .., H_S \subset [\ell] \times [\ell]$ under which the following inequality is correct: 
	\be
	P( \bigcap_{t=1}^T \bigcup_{(i,j) \in F_t} E_{ij} | 
	\bigcap_{s=1}^S \bigcup_{(i,j) \in H_s} E_{ij} ) \leq 
	P(\bigcap_{t=1}^T \bigcup_{(i,j) \in F_t} E_{ij} | 
	\bigcap_{s=1}^S \bigcap_{(i,j) \in H_s} E_{ij})\,. 
	\label{eq:CNF_DNF_inequality}
	\ee
\end{problem}

\begin{remark}\label{remark XYZ}\normalfont
	The inequality in Problem \ref{conj:DNF_CNF} is a generalization of the XYZ-inequality. 
For a partial order with $q$ pairwise order relations $\prec = \{(a_i, b_i) , \: i\in[q] \}$ take
	$S = q + 1$ with $H_i = \{(a_i, b_j)\}$ for $i\in[q]$ and $H_S = \{ (a_1, b_1), (1, 3) \}$. Take also $T=1, F_1 = \{(1,2)\}$.	Then Eq. \eqref{eq:CNF_DNF_inequality} reduces to Eq. \eqref{eq:XYZ} with $\ell=n$.
\end{remark}

\begin{remark}\label{remark: correlation}\normalfont
	The inequality \eqref{eq:CNF_DNF_inequality} is also a generalization of Theorem \ref{thm:dominate_conditioning_inequality}. Let $\ell = n \cdot k$ and for each $i\in[n]$ define $X_i\equiv(U_{(i-1)k+1},U_{(i-1)k+2},\ldots, U_{ik})$. Then, 
	\begin{equation}
	 B_{ij}=\left\{X_i\prec X_j\right\}= \bigcap\limits_{r=1}^k \{U_{(i-1)k+r} \leq U_{(j-1)k+r}\}\ \ , \ \ \forall (i,j)\in[n]\times[n]\,,
	\end{equation}\textit{i.e.,} \eqref{eq:dominate_conditioning_inequality} is a special case of \eqref{eq:CNF_DNF_inequality}
	with $T=S=n-2$ and
	\begin{align}
	 & F_t = \big\{ \left(k+1, (t+1)k+1\right),(k+2, (t+1)k+2), \ldots, (2k, (t+2)k) \big\}\ \ , \ \ \forall t\in[n-2]\,, \nonumber \\
	 & H_s = \big\{ \left(1, (s+1)k+1\right),\left(2, (s+1)k+2\right),\ldots, \left(k, (s+2)k\right) \big\}\ \ , \ \ \forall s\in[n-2]\,.
	\end{align}
	
\end{remark}

\begin{remark}\label{remark: counterexample}\normalfont
Several of the counter-examples discussed in Section \ref{sec:literature} and the references therein can be phrased as violating the inequality \eqref{eq:CNF_DNF_inequality}. For example, take 
$n=4$ with $T=S=2$ and the following sets: 
	\begin{align}
		F_1 &= \{(1,4)\}\:; \: F_2=\{(1,4), (2,4)\}, \nonumber \\
		H_1 &= \{(1,3)\}\:; \: H_2 = \{(1,3), (2,3)\}.
	\end{align}
	Then the inequality \eqref{eq:CNF_DNF_inequality} reduces to (for some similar related examples, see \cite{shepp1982xyz}): 
	\be
	P(U_1 < U_4 | U_1 < U_3) \leq P ( U_1 < U_4 | U_1, U_2 < U_3).
	\ee
	
	By symmetry, there are $12$ permutations (out of $24$) over $U_1,U_2,U_3,U_4$ satisfying $U_1<U_3$,and $8$ permutations satisfying $U_1 < U_3, U_4$, and similarly $8$ permutations satisfying $U_1, U_2 < U_3$. Therefore, the left hand side of the above inequality is $\frac{8}{12}$. To get the numerator of the right hand side, we need to count the permutations satisfying $U_1, U_2 < U_3$ and $U_1< U_4$. There are five such permutations: 
$\{ (1, 2, 3, 4), (1, 2, 4, 3), (2, 1, 3, 4), (2, 1, 4, 3), (1, 4, 2, 3) \}$,
	hence the right hand side is $\frac{5}{8} < \frac{8}{12}$ and the inequality fails.
\end{remark}
Remark \ref{remark: counterexample} implies that the inequality \eqref{eq:CNF_DNF_inequality} is not true in general. However, Remark \ref{remark XYZ} and Remark \ref{remark: correlation} imply that the inequality \eqref{eq:CNF_DNF_inequality} may hold under certain restrictions on the sets $F_t, H_s$ and their relationships. This motivates finding the most general conditions under which the inequality \eqref{eq:CNF_DNF_inequality} is correct. We suspect that Theorem \ref{thm:dominate_conditioning_inequality} might be extended to the following more general case:

\begin{conjecture}
	Assume that $U_1,..,U_k, U_{k+1},..,U_{2k}, U_{2k+1},...,U_{n} \sim U(0,1)$ are i.i.d. uniform random variables. Let $F_1,..,F_T \subset [k] \times \{2k+1,..,n\}$ and $H_1, .., H_T \subset \{k+1,..,2k\} \times \{2k+1,..,n\}$ for $T\geq 1$, with $|F_t|=|H_t|=m \leq k, \: \forall t \in [T]$. Suppose that $\sum\limits_{j=1}^m \indicatorpar{(i,j) \in F_t} \leq 1, \: \forall i \in [k], t \in [T]$ and similarly
	$\sum\limits_{j=1}^m \indicatorpar{(i,j) \in H_t} \leq 1, \: \forall i \in \{k+1,..,2k\}, t \in [T]$.
	Then, the inequality \eqref{eq:CNF_DNF_inequality} holds. 
	\label{conj:unique_vertex_in_each_clause}
\end{conjecture}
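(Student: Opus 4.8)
The plan is to follow the architecture of the proof of Theorem \ref{thm:dominate_conditioning_inequality}, pushing each step through the richer clause structure. First I would condition on the two blocks, writing $a\equiv(U_1,\ldots,U_k)$ and $b\equiv(U_{k+1},\ldots,U_{2k})$; conditionally on $(a,b)$ the outer coordinates $U_{2k+1},\ldots,U_n$ are i.i.d. uniform and independent of the blocks. Writing $A\equiv\bigcap_{t=1}^T\bigcup_{(i,j)\in F_t}E_{ij}$, $B_{\mathrm{cnf}}\equiv\bigcap_{s=1}^T\bigcup_{(i,j)\in H_s}E_{ij}$ and $B_{\mathrm{full}}\equiv\bigcap_{s=1}^T\bigcap_{(i,j)\in H_s}E_{ij}$, the inequality \eqref{eq:CNF_DNF_inequality} is equivalent, after cross-multiplication, to
\[
P(A\cap B_{\mathrm{cnf}})\,P(B_{\mathrm{full}})\le P(A\cap B_{\mathrm{full}})\,P(B_{\mathrm{cnf}})\,,
\]
and the natural target is the conditional version of this given $(a,b)$, exactly as in \eqref{eq:conditioning_x12}.

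The second step is to exploit the hypotheses $|F_t|=|H_t|=m$ together with the matching conditions $\sum_j\indicatorpar{(i,j)\in F_t}\le 1$ and $\sum_j\indicatorpar{(i,j)\in H_t}\le 1$, which guarantee that inside each clause no block index is repeated, so a clause is an honest disjunction over a matching between block coordinates and outer coordinates. Grouping the outer coordinates by the clauses in which they occur — and assuming, as in the construction of Remark \ref{remark: correlation}, that distinct clauses are supported on disjoint sets of outer coordinates — the conditional probabilities factorize into a product over $t$ (the analogue of the product over $X_3,\ldots,X_n$). It then suffices to prove one inequality per clause,
\[
P(T_t\cap C_t\mid a,b)\,P(D_t\mid b)\le P(T_t\cap D_t\mid a,b)\,P(C_t\mid b)\,,
\]
where $T_t\equiv\bigcup_{(i,j)\in F_t}E_{ij}$, $C_t\equiv\bigcup_{(i,j)\in H_t}E_{ij}$ and $D_t\equiv\bigcap_{(i,j)\in H_t}E_{ij}$.

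Third, each single-clause probability is a sum/product of elementary box volumes in the outer coordinates, as in \eqref{eq:three_boxes_volumes}, hence a polynomial in the thresholds $\{a_i\},\{b_i\}$; after cancelling the common terms the per-clause inequality should collapse to $G_t(a,b)\ge 0$ for an explicit polynomial $G_t$ that is affine in each threshold separately, precisely as $G(x_2;x_1)$ in \eqref{eq:G_volume_x1_x2}. I would then attack $G_t\ge 0$ by the same device: multilinearity forces the minimum over the relevant box to be attained at a vertex, and an induction on the number of nonzero threshold coordinates reduces everything to boundary evaluations that either vanish or are manifestly nonnegative.

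The hard part will be two-fold. The genuinely new combinatorial difficulty is the single-clause inequality $G_t\ge0$ itself: in Theorem \ref{thm:dominate_conditioning_inequality} one has $m=k$ with the identity matching, so the target and conditioning thresholds align coordinate-by-coordinate, whereas here $m$ may be strictly smaller than $k$ and the matchings realizing $F_t$ and $H_t$ need not agree, so $G_t$ ranges over a strictly larger family of multilinear polynomials and one must verify that the vertex/boundary induction closes for every admissible matching pattern — including clauses where $F_t$ and $H_t$ share only part of their outer coordinates, and (dropping the disjointness assumption above) clauses that share outer coordinates across different $t$, where the factorization into independent groups fails outright. The second, subtler point is the passage from the conditional inequality back to \eqref{eq:CNF_DNF_inequality}: this is not a mere integration of a pointwise product inequality, since that implication is false for arbitrary nonnegative integrands, and it must instead be underwritten by monotonicity of $P(A\mid B_{\mathrm{full}},b)$ in $b$ together with a monotone-likelihood-ratio (Chebyshev/FKG-type) comparison of the two conditional laws of $b$ induced by $B_{\mathrm{cnf}}$ and $B_{\mathrm{full}}$. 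I expect $G_t\ge0$ in the general matching regime to be the principal obstacle, with the correlation step being where the block-and-matching hypotheses of the Conjecture must be invoked most carefully.
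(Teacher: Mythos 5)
You are attempting to prove Conjecture \ref{conj:unique_vertex_in_each_clause}, which the paper does not prove: it is only verified numerically for $n\leq 12$, $m\leq 4$, $T\leq 3$, and a rigorous proof is explicitly deferred to future work, so there is no proof of record to compare against and your proposal must stand on its own. As you acknowledge, it is a plan rather than a proof, and two of its steps have genuine gaps beyond "this is the hard part." First, the clause-wise factorization in your second step assumes that distinct clauses (and, implicitly, distinct pairs within a clause) are supported on disjoint outer coordinates. The hypotheses of the conjecture only forbid repeating a \emph{block} index within a single $F_t$ or $H_t$ (the condition $\sum_j \indicatorpar{(i,j)\in F_t}\leq 1$ constrains $i$, not $j$); the same outer variable $U_j$, $j>2k$, may appear in several clauses or twice within one clause. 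In that case the conditional independence across $t$ given the two blocks fails and the reduction to a per-clause inequality $G_t\geq 0$ does not go through; restricting to disjoint supports proves a strictly weaker statement than the conjecture. Second, even granting disjointness, the per-clause inequality is precisely the new mathematical content here, and you give no replacement for the boundary identities \eqref{eq:boundary_one}--\eqref{eq:boundary_zero} that anchor the vertex/induction argument in the proof of Theorem \ref{thm:dominate_conditioning_inequality}; when $m<k$ or when the pairings realizing $F_t$ and $H_t$ do not align coordinatewise, those base cases are simply not available.

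Your concern about the deconditioning step is well founded --- a pointwise inequality $f_1g_1\leq f_2g_2$ does not imply $\int f_1\int g_1\leq\int f_2\int g_2$ --- but the repair you propose (stochastic dominance of the conditional law of the block under $B_{\mathrm{full}}$ over that under $B_{\mathrm{cnf}}$, combined with monotonicity of $s(x_1)\equiv P(A\mid B_{\mathrm{full}},x_1)$) fails already in the base case of Theorem \ref{thm:dominate_conditioning_inequality}. Take $k=2$, $n=3$, so that $B_{\mathrm{full}}=B_{31}=\{X_3\prec X_1\}$ and $B_{\mathrm{cnf}}=\overline{B_{13}}$. Then $s(x_1)=P(X_2\not\prec X_3\mid X_3\prec x_1)=1-x_{11}x_{12}/4$, and a direct computation gives $\mathbb{E}[s(X_1)\mid \overline{B_{13}}]=\tfrac{25}{27}$ while $\mathbb{E}[s(X_1)\mid B_{31}]=\tfrac{8}{9}=\tfrac{24}{27}$. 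The conditional law of $X_1$ given $B_{31}$ is indeed stochastically larger (the likelihood ratio $\prod_i x_{1i}/[1-\prod_i(1-x_{1i})]$ is increasing in each coordinate), but $s$ is \emph{decreasing} in $x_1$, so the Chebyshev/FKG comparison points in the wrong direction; the target inequality holds here only because the slack in the pointwise step ($P(A\mid B_{\mathrm{cnf}})=\tfrac{22}{27}$) exceeds this loss. So the passage from a conditional inequality of type \eqref{eq:conditioning_x12} to \eqref{eq:CNF_DNF_inequality} requires a mechanism you have not supplied --- and note that this same issue must be resolved to make the "integration over $x_{[2]}$" step in the published proof of Theorem \ref{thm:dominate_conditioning_inequality} fully rigorous before its architecture can be generalized.
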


Conjecture \ref{conj:unique_vertex_in_each_clause} was checked numerically and verified for all subsets up to $n \leq 12, m \leq 4, T \leq 3$. 
Three restrictions are imposed on the $F_t, H_t$ sets in the above conjecture: First, the r.v.s $U_i$ are divided into three sets, where variables from the first (second) set can be only the first indices in pairs appearing in $F_t$ ($H_t$), and variables from the third set appear as the second indices in all pairs in $F_t, H_t$. Second, in each $F_t$ or $H_t$ every variable from the first or second set can appear at most once. Finally, the sizes of all sets $F_t,H_t$ are fixed at $m$. All three conditions are necessary. For example, if we allow sets $H_t$ 
of different sizes, a counter-example is given by: 
\begin{align}
	F_1 &= \{(1,5)\}\:; \: F_2=\{(1,6) \}, \nonumber \\
	H_1 &= \{(3,5)\}\:; \: H_2 = \{(3,7), (4,8)\}.
	\label{eq:counter}
\end{align}
Exact enumeration of all consistent partial orders yield: 
\begin{align}
& P\Big(U_1 < U_5 , U_6 | \{ U_3 < U_5 \} \cap \big\{ \{U_3 < U_7\} \cup \{U_4 < U_8\} \big\}\Big) = \frac{861}{2100} = 0.41 > \nonumber \\ 
& P\Big(U_1 < U_5 , U_6 | \{ U_3 < U_5 \} \cap \{ U_3 < U_7 \} \cap \{U_4 < U_8\} \Big) = \frac{336}{840} = 0.4. 
\end{align}

\subsubsection{Computations}
We have developed a computational pipeline for checking whether an inequality of the form \eqref{eq:CNF_DNF_inequality} holds for specific cases, by exhaustive counting of all of the permutations induced by the partial ordering events specified by $F_t, H_s$. When $T=S=1$, the problem reduces to counting linear extensions of a partial order induced by the $E_{ij}$ events. 

For a general partial order, it was shown in \cite{brightwell1991counting} that counting the number of linear extensions is $\#P$-complete. When $T > 1$, exact counting of the number of total orders consistent with the restrictions imposed by the $F_t, H_s$ sets is even harder. For some specific cases the computation might be easier - for example, the number of linear extensions of a partial order given by a chain over $m \leq n$ variables (e.g. $U_1 < U_3 < U_7$ for $m=3 < n=7$), is $\frac{n!}{m!}$ by symmetry.

There are known polynomial randomized approximation algorithms for the general case \cite{dyer1991random}. However, these algorithms only yield an approximate answer, hence to verify whether the inequality \eqref{eq:CNF_DNF_inequality} holds for specific cases, we may need to run them to until the error in estimating event probabilities is smaller than $\frac{1}{n!}$, or alternatively use them as a first filtering step, and turn to exact exhaustive computation when the randomized approach provides evidence for a violation of the inequality. 

The above techniques are implemented in a python code for checking whether the inequality \eqref{eq:CNF_DNF_inequality} holds for a particular set collection $\{F_1,..,F_T ; H_1, .., H_S\}$, and for finding putative counter-example by either sampling such sets collections, or by an exhaustive enumeration over the possible sets $F_t, H_s$ of given sizes. The code can be used for small values of $n$, and is available at \href{https://github.com/orzuk/Pareto}{{\it https://github.com/orzuk/Pareto}}. 
For example, the following command verifies the counter-example \eqref{eq:counter}, \\
{\scriptsize
\texttt{$>>>$ check\_CNF\_DNF\_inequality(9, [[(1, 4)], [(1, 4), (2, 4)]] , [[(1, 3)], [(1, 3), (2, 3)]]) \\
(False, 861, 336, 2100, 840)}
}

The next command was used to verify Conjecture \ref{conj:unique_vertex_in_each_clause} for $n \leq 12, m \leq 4, T \leq 3$ by enumerating all $\sim\!10^6$ possible subset collections up to these sizes, running for $\sim\!35$ hours on a standard laptop PC, and failing to find a counter-example among them: \\
{\scriptsize
\texttt{$>>>$ find\_enumerate\_counter\_example\_DNF\_CNF\_inequality(n = 12, num\_edges = 4, num\_sets = 3) \\
False 
}}

While more efficient algorithms and code are available for counting linear extensions of partial orders, for either the general case or special partial orders (see e.g. \cite{pruesse1994generating,kangas2016counting,kangas2020faster,talvitie2018counting}), they can be applied directly only for the case $T=S=1$. Addressing the more general case requires developing new algorithms for efficient counting or enumeration of the total orders consistent with intersections of unions of pairwise ordering events - such algorithms may be used in in order to provide evidence for proposed putative inequalities or find novel counter-examples for small value of $n$, and may be of independent interest in combinatorial applications. These computational improvements, as well as deriving and proving rigorously generalizations of Theorem \ref{thm:dominate_conditioning_inequality} such as Conjecture \ref{conj:unique_vertex_in_each_clause}, are left for future work.

\section{Some related results}
\label{sec:related results}

\subsection{Asymptotic independence}\label{sec:asymptotic independence}
One possible application of Theorem \ref{thm:dominate_conditioning_inequality} is given by the next theorem:
\begin{theorem}\label{thm:asymptotic independence}
For every $n\geq1$, let \be
 p_n\equiv P(E_{1,n})\ \ , \ \ e_n\equiv P(E_{1,n}\cap E_{2,n})\,.
\ee
\begin{enumerate}
 \item For every $k\geq1$
\be\label{eq:e_k,n exact formula}
 e_n=\sum\limits_{\substack{a,b,c,d\in\mathbb{Z}_+ : \\ a+b+c+d=n-2}}
(-1)^{a+b} \binom{n-2}{a\: b \: c \: d} \frac{(a+b+2c+2)^k - (b+c+1)^k-(a+c+1)^k}{(a+c+1)^k(b+c+1)^k(a+b+c+2)^k}\,. 
\ee

\item If $k>1$, then

\be\label{eq:e_k,n approx}
 e_n\sim p^2_{k,n}\sim \left[\frac{\log^{k-1}(n)}{n (k-1)!}\right]^2\ \ \text{as}\ \ n\to\infty
\ee
and hence 
\be\label{eq:little o rho}
 \rho_{k,n}\equiv\text{Corr}(\indicator{E_{1,n}},\indicator{E_{2,n}})=o\left[\frac{\log^{k-1}(n)}{n}\right]\ \ \text{as} \ \ n\to\infty\,.
\ee
\end{enumerate}
\end{theorem}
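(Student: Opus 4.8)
The plan is to handle the two parts separately: derive the exact formula \eqref{eq:e_k,n exact formula} by direct integration, and then combine it with the correlation inequality of Theorem \ref{thm:dominate_conditioning_inequality} to obtain the asymptotics \eqref{eq:e_k,n approx}--\eqref{eq:little o rho}.

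For Part 1 I would condition on $X_1=x_1$ and $X_2=x_2$ and set $P_1=\prod_{i=1}^k(1-x_{1i})$, $P_2=\prod_{i=1}^k(1-x_{2i})$, and $P_{12}=\prod_{i=1}^k(1-\max\{x_{1i},x_{2i}\})$. The event $E_{1,n}\cap E_{2,n}$ holds iff $x_1,x_2$ are incomparable and none of the remaining $n-2$ points $\prec$-dominates either $x_1$ or $x_2$; since those points are i.i.d., the conditional probability factorizes as $\indicatorpar{x_1\not\prec x_2}\indicatorpar{x_2\not\prec x_1}\,q^{\,n-2}$ with $q=1-P_1-P_2+P_{12}$, which is exactly the single-point probability $P(\overline{B_{23}}\cap\overline{B_{13}}\mid x_{[2]})$ already computed in the proof of Theorem \ref{thm:dominate_conditioning_inequality}. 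Expanding $q^{n-2}$ by the multinomial theorem writes $e_n$ as a sum over $a+b+c+d=n-2$ of $(-1)^{a+b}\binom{n-2}{a,b,c,d}$ times $\int\!\!\int_{\mathrm{incomp}}P_1^aP_2^bP_{12}^c\,dx_1dx_2$. The one clever step is evaluating this integral: writing the incomparable region as the full cube minus $\{x_1\prec x_2\}$ and $\{x_2\prec x_1\}$ (their intersection has measure zero), each of the three resulting integrals factorizes over coordinates, because on $\{x_1\prec x_2\}$ one has $P_{12}=P_2$ and on $\{x_2\prec x_1\}$ one has $P_{12}=P_1$, so the joint constraint decouples. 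The three one-dimensional integrals are elementary, and after simplification they combine into the single term $\frac{(a+b+2c+2)^k-(a+c+1)^k-(b+c+1)^k}{(a+c+1)^k(b+c+1)^k(a+b+c+2)^k}$, which is precisely the summand in \eqref{eq:e_k,n exact formula}.

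For Part 2 I would abandon the alternating-sign formula and argue probabilistically. Write $A_i=\{X_i\in W\}$, so $P(A_i)=w_n:=\int_{[0,1]^k}(1-P_1)^{n-2}dx_1$, and recall the classical estimate $p_{k,n}\sim w_n\sim\frac{\log^{k-1}(n)}{(k-1)!\,n}$, obtained from $p_{k,n}=\int(1-P_1)^{n-1}dx_1$ by a standard Laplace/Poissonization computation (the integrand concentrates where $P_1=\prod(1-x_{1i})$ is of order $1/n$). Since $E_{1,n}=A_1\cap\overline{B_{12}}$ and $E_{2,n}=A_2\cap\overline{B_{21}}$, we have $e_n=P(A_1\cap A_2\cap\overline{B_{12}}\cap\overline{B_{21}})\le P(A_1\cap A_2)$. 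This is where Theorem \ref{thm:dominate_conditioning_inequality} enters for the upper bound: it gives $P(A_2\mid A_1)\le P(A_2\mid X_1\in S)$, whence $P(A_1\cap A_2)\le w_n\,P(X_2\in W\mid X_1\in S)$. I would then show $P(X_2\in W\mid X_1\in S)=w_n(1+o(1))$: conditioned on $X_1\in S$, the law of $X_1$ has density proportional to $(\prod x_{1i})^{n-2}$ and therefore concentrates within $O(1/n)$ of the corner $\mathbf 1_k$, so the $n-2$ lower points are confined to a box filling the cube up to an $O(1/n)$ fraction, and the conditional probability that $X_2$ escapes domination matches $w_n$ to leading order. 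This yields $e_n\le p_{k,n}^2(1+o(1))$.

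For the matching lower bound, observe that given $x_1,x_2$ the events $A_1,A_2$ are decreasing in $(X_3,\dots,X_n)$, so by the Harris/FKG inequality $P(A_1\cap A_2)\ge w_n^2$. It then remains to control the incomparability defect $P(A_1\cap A_2)-e_n\le P(A_1\cap B_{12})+P(A_2\cap B_{21})$; each term equals $\int P_1(1-P_1)^{n-2}dx_1$, and the extra factor $P_1$ (of order $1/n$ on the dominant region) makes it of order $\log^{k-1}(n)/n^2$, which is $o(w_n^2)$ precisely when $k>1$. Combining the two directions gives $e_n\sim w_n^2\sim p_{k,n}^2$, and since $\rho_{k,n}=(e_n-p_{k,n}^2)/[p_{k,n}(1-p_{k,n})]$ with $e_n-p_{k,n}^2=o(p_{k,n}^2)$, the estimate \eqref{eq:little o rho} follows. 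I expect the main obstacle to be the estimate $P(X_2\in W\mid X_1\in S)\sim w_n$: unlike the crude FKG lower bound, this requires quantitatively showing that conditioning on the rare event $S$ does not inflate the probability that $X_2$ is undominated, via a careful Laplace-type analysis of the concentration of $X_1$ near the corner.
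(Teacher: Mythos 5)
Your Part 1 is essentially the paper's own argument: condition on $(x_1,x_2)$, factor the conditional probability over the $n-2$ remaining i.i.d.\ points, expand by the multinomial theorem, and split the incomparability indicator as $1-\indicatorpar{x_1\prec x_2}-\indicatorpar{x_2\prec x_1}$ so that each of the three integrals factorizes over coordinates (your observation that $P_{12}=P_2$ on $\{x_1\prec x_2\}$ is exactly the mechanism the paper exploits, up to the harmless substitution $x\mapsto 1-x$). Part 2 is correct but follows a genuinely different route around the same pivot. Both you and the paper use Theorem \ref{thm:dominate_conditioning_inequality} to pass from conditioning on $\{X_1\in W\}$ to conditioning on $\{X_1\in S\}$; the difference is in the two supporting estimates. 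For the upper bound the paper evaluates $P(X_2\in W\mid X_1\in S)$ exactly by inclusion--exclusion over ranks, arriving at $1+\left(\tfrac{n-1}{n}\right)^k(p_{k,n-1}-1)$. The step you flag as ``the main obstacle'' is in fact exact and needs no Laplace-type analysis: given $X_1=x_1$ and $X_1\in S$, the points $X_3,\dots,X_n$ are i.i.d.\ uniform on $\bigtimes_{i=1}^k[0,x_{1i}]$, and rescaling that box to $[0,1]^k$ gives
\[
P\big(X_2\in W\mid X_1=x_1,\ X_1\in S\big)=\Big(\prod_{i=1}^k x_{1i}\Big)\,w_n+\Big(1-\prod_{i=1}^k x_{1i}\Big),
\]
which, averaged against the conditional density proportional to $\prod_i x_{1i}^{\,n-2}$ (so that $\mathbb{E}[\prod_i X_{1i}\mid S]=((n-1)/n)^k$), reproduces the paper's formula verbatim; you should carry this out rather than gesture at concentration, noting that the residual $O(k/n)$ term is $o(w_n)$ only because $k>1$. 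For the lower bound the paper uses a short exchangeability/Cauchy--Schwarz lemma, $\mathbb{E}\big(\sum_i\indicator{E_{i,n}}\mid \indicator{E_{1,n}}=1\big)\ge np_n$, which yields $e_n/p_n^2\ge\frac{n}{n-1}-\frac{1}{(n-1)p_n}\to1$ with no further asymptotics; your Harris--FKG bound $P(A_1\cap A_2)\ge w_n^2$ (valid, since both events are decreasing in $(X_3,\dots,X_n)$ once $x_1,x_2$ are fixed) combined with the incomparability-defect estimate $\int P_1(1-P_1)^{n-2}dx_1\sim\log^{k-1}(n)/((k-1)!\,n^2)=o(w_n^2)$ for $k>1$ also works, at the cost of one more alternating-sum asymptotic of the same type as \eqref{eq:p_n_first_order}. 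In short: same skeleton, but the paper's version is more self-contained and entirely combinatorial, while yours makes the geometry of why conditioning on $S$ does not inflate $P(X_2\in W)$ more transparent; neither of your two unproved estimates is a real gap, but both must be written out to complete the proof.
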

\begin{remark}
\normalfont In the current analysis $k$ is a fixed parameter of the model. In particular, it is a parameter of the events $E_{1,n},E_{2,n},\ldots,E_{n,n}$ and correspondingly it appears in the expressions of $p_n$ and $e_n$. At the same time, $k$ remains constant all over the analysis to follow. Consequently, for brevity, it is suppressed in the notations: $E_{i,n}$, $p_n$ and $e_n$.
\end{remark}
The first part of Theorem \ref{thm:asymptotic independence} includes a combinatorial expression for $e_n$ which is, to the best of our knowledge, a new one. Theorem \ref{thm:dominate_conditioning_inequality} is applied in order to provide an elementary proof for the second part of Theorem \ref{thm:asymptotic independence}. 
Importantly, an alternative proof for the second part of Theorem \ref{thm:asymptotic independence} stems from existing results. Specifically, it is well-known (see, \textit{e.g.,} Theorems 3 and 4 in \cite{o1981number}) that
\be 
\label{eq:combinatorial formula}
p_n = \sum_{u=1}^n\binom{n-1}{u-1}\frac{(-1)^{u-1}}{u^k}
\ee
and
\be
p_n\sim\frac{\log^{k-1}(n)}{n (k-1)!}\ \ \text{as}\ \ n\to\infty\,.
\label{eq:p_n_first_order}
\ee
The main result of \cite{bai1998variance} implies that there exists a constant $C_k>0$ such that for every $k>1$
\be
\text{Var}\left(\sum_{i=1}^n\indicator{E_{i,n}}\right)\sim C_k\log^{k-1}(n)\ \ \text{as}\ \ n\to\infty\,.
\ee
Thus, because
\begin{align}
\text{Var} \left(\sum_{i=1}^n\indicator{E_{i,n}}\right) &= \sum_{i=1}^n \text{Var} \left(\indicator{E_{i,n}}\right) + 2 \sum_{i=1}^n \sum_{j=i+1}^n \text{Cov}(\indicator{E_{i,n}}, \indicator{E_{j,n}}) \\ 
&= n \text{Var}(Z_1) + n(n-1) \left\{\mathbb{E} \left(\indicator{E_{1,n}} \indicator{E_{2,n}}\right) - \left[\mathbb{E}\left( \indicator{E_{1,n}}\right)\right]^2 \right\} \nonumber \\
&= n p_n (1-p_n) + n(n-1) (e_n - p_n^2)\,, \nonumber
\end{align}
it is possible to deduce that 
\be
 \lim_{n\to\infty}\frac{e_n}{p_n^2}=1+\lim_{n\to\infty}\frac{\text{Var}(\sum_{i=1}^n\indicator{E_{i,n}})}{(np_n)^2}-\lim_{n\to\infty}\frac{1}{np_n}=1\,.
\ee
\subsection{A limit theorem}\label{sec:limit theorems}
In this part, we consider a sequence of models: For each $n\in\mathbb{N}$ assume that $X_1^n,X_2^n,\ldots,X_{m_n}^n$ are i.i.d. random vectors distributed uniformly in the unit hypercube $[0,1]^k$. For every $n\in\mathbb{N}$ and $i\in[m_n]$ denote the event
\be
 E_{i,n}\equiv\big\{X_i\text{ is a maximum of $\{X_1^n,X_2^n,\ldots,X_{m_n}^n\}$}\big\}\,, 
\ee
and the corresponding indicator $Z_{ni}\equiv \indicator{E_{i,n}}$. Since $k$ remains fixed we keep suppressing it in the notations. An implication of Theorem \ref{thm:asymptotic independence} is addressed in the following theorem:

\begin{theorem}\label{thm:LLN+CLT}
\leavevmode
\begin{enumerate}
\item If $m_n=n$ for every $n\in\mathbb{N}$, then for every $k\in\mathbb{N}$:
\be\label{eq:LLN1}
\frac{1}{np_n}\sum_{i=1}^n Z_i^{k,n}\rightarrow1 \ \ \text{as}\ \ n\to\infty\ \ \text{in} \ \ L^2(P)\,.
\ee

\item Let $(m_n)_{n=1}^\infty$ be a sequence of positive integers such that
$\lim\limits_{n \to \infty} \frac{n}{m_n} = \gamma\in(0,1)$ and assume that $k>1$. For any $n\geq1$ and $i\in [m_n]$ denote
\be
U_{ni}\equiv\frac{Z_{ni}-p_{m_n}}{\sqrt{p_{m_n}(1-p_{m_n})}}\,.
\ee
Then,
\be
\sqrt{n}\left(\frac{1}{n}\sum_{i=1}^{n}U_{ni}-\frac{1}{m_n}\sum_{i=1}^{m_n}U_{ni}\right)\xrightarrow{d}\mathcal{N}(0,1-\gamma)\ \ \text{as } n\to\infty\,.
\label{eq:CLT_differences}
\ee

\item 
For any $k > 1$, there exists a sequence $(m_n)_{n=1}^\infty$ of positive integers such that
\be
n \ll m_n \ll n \log^k n , 
\ee
and: 
\be
\frac{1}{\sqrt{n}}\sum_{i=1}^{n}U_{ni} \xrightarrow{d}\mathcal{N}(0,1)\ \ \text{as } n\to\infty\,.
\label{eq:CLT_partial_sum}
\ee

\end{enumerate}

\end{theorem}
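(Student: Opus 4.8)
The plan is to handle the three parts separately, with the two central limit theorems sharing a single engine: the limit theorem for triangular arrays of row-wise exchangeable random variables of \cite{weber1980martingale}. Part 1 I would dispatch by a second-moment argument. Since $\mathbb{E}\big[\frac{1}{np_n}\sum_{i=1}^n Z_{ni}\big]=1$ exactly, it suffices to show that the variance vanishes. Inserting the variance decomposition already displayed in the excerpt into $\frac{1}{(np_n)^2}\text{Var}\big(\sum_{i=1}^n Z_{ni}\big)$ gives $\frac{1-p_n}{np_n}+\frac{n-1}{n}\big(\frac{e_n}{p_n^2}-1\big)$. For $k>1$ the first summand vanishes because $np_n\to\infty$ by \eqref{eq:p_n_first_order}, and the second vanishes because $e_n/p_n^2\to 1$ by Theorem \ref{thm:asymptotic independence}; the case $k=1$ is immediate, since then there is a.s.\ a unique maximum, so $\sum_{i=1}^n Z_{ni}\equiv 1$ and $np_n=1$, making the expression identically $1$.

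For the two CLTs I would first record that, for each fixed $n$, the standardized indicators $U_{n1},\ldots,U_{n m_n}$ are row-wise exchangeable with $\mathbb{E}U_{ni}=0$, $\text{Var}(U_{ni})=1$, and $\text{Cov}(U_{ni},U_{nj})=r_n:=\rho_{k,m_n}$ for $i\neq j$, where $r_n=o(\log^{k-1}(m_n)/m_n)\to0$ by \eqref{eq:little o rho}. Both target statistics are linear functionals $\sum_{i=1}^{m_n} c_{ni}U_{ni}$ of this exchangeable row, and I would verify the hypotheses of the exchangeable-array CLT of \cite{weber1980martingale}: (i) a negligibility bound $\max_i|c_{ni}U_{ni}|\to0$, and (ii) conditional-variance concentration $\sum_{i=1}^{m_n} c_{ni}^2 U_{ni}^2\xrightarrow{P}\sigma^2$. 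A useful simplification is that $Z_{ni}^2=Z_{ni}$ renders each $U_{ni}^2$ an affine function of $Z_{ni}$; hence every moment entering (ii) collapses to the pairwise quantities $p_{m_n},e_{m_n}$ already controlled by Theorem \ref{thm:asymptotic independence}, so that no genuine third- or fourth-order joint moments of the maxima indicators are needed.

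For Part 2 I would write the difference as $T_n=\frac{1}{\sqrt n}\sum_{i=1}^n(U_{ni}-\bar U_n)$ with $\bar U_n=\frac{1}{m_n}\sum_{i=1}^{m_n}U_{ni}$, i.e.\ $c_{ni}=\frac{1}{\sqrt n}-\frac{\sqrt n}{m_n}$ for $i\le n$ and $c_{ni}=-\frac{\sqrt n}{m_n}$ for $i>n$. Then $\sum_i c_{ni}=0$ and $\sum_i c_{ni}^2=1-\tfrac{n}{m_n}\to1-\gamma$, while a routine expansion of the variance into diagonal and cross terms produces the clean identity $\text{Var}(T_n)=\frac{m_n-n}{m_n}(1-r_n)\to1-\gamma$; the $r_n$ contributions very nearly cancel, so only $r_n\to0$ is used, not $nr_n\to0$. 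The decisive structural fact is that $\sum_i c_{ni}=0$ annihilates the common (de Finetti) component of the exchangeable row, which is exactly what forces the limit to be a genuine Gaussian rather than a mixture of Gaussians. Since $k>1$ yields $np_{m_n}\sim\gamma\log^{k-1}(m_n)/(k-1)!\to\infty$, one has $|c_{ni}U_{ni}|=O(1/\sqrt{np_{m_n}})\to0$, so (i) holds and the CLT applies with variance $1-\gamma$.

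The existence statement of Part 3 is where I expect the main difficulty. For the simple partial sum $\frac1{\sqrt n}\sum_{i=1}^n U_{ni}$ the coefficients no longer sum to zero, so the common component is not automatically removed; instead it must be suppressed through the choice of $m_n$, using $\text{Var}\big(\frac1{\sqrt n}\sum_{i=1}^n U_{ni}\big)=1+(n-1)r_n$, whose excess term $(n-1)r_n$ is precisely the common-component contribution and must be driven to $0$. The obstacle is that this pulls $m_n$ in two opposing directions: negligibility (i) requires $np_{m_n}\to\infty$, which keeps $m_n$ from being much larger than $n\log^{k-1}n$, whereas variance convergence requires $nr_n\to0$, which forces $m_n$ large. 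Writing $\phi_n=n\log^{k-1}(m_n)/m_n$ and $g(m)=\rho_{k,m}\,m/\log^{k-1}(m)$ (so that $g(m)\to0$ by \eqref{eq:little o rho}), the two requirements read $\phi_n\to\infty$, ensuring $np_{m_n}\sim\phi_n/(k-1)!\to\infty$, and $\phi_n g(m_n)\to0$, ensuring $nr_n=\phi_n g(m_n)\to0$. Because $g\to0$ with no known rate, no explicit $m_n$ is guaranteed to work, but a diagonal choice letting $\phi_n\to\infty$ slowly relative to the decay of $g$ produces an $m_n$ in the window $n\ll m_n\ll n\log^{k-1}n\ll n\log^k n$ satisfying both conditions; with such $m_n$ the same exchangeable-array CLT of \cite{weber1980martingale} delivers $\mathcal{N}(0,1)$, completing the proof.
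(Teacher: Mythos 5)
Your proposal is correct and follows essentially the same route as the paper: part 1 via a second-moment computation resting on $np_n\to\infty$ and $e_n/p_n^2\to 1$ from Theorem \ref{thm:asymptotic independence}, and parts 2 and 3 via the row-wise exchangeable triangular-array limit theorems of \cite{weber1980martingale}, checking the same three hypotheses (vanishing pairwise correlation, negligibility $n^{-1/2}\max_i|U_{ni}|\to 0$ via $np_{m_n}\to\infty$, and $\frac{1}{n}\sum_i U_{ni}^2\to 1$, which as you note reduces to the pairwise quantities $p_{m_n},e_{m_n}$). The only substantive difference is that in part 3 you assert the balancing sequence by a diagonal argument, whereas the paper writes it out explicitly as $m_n\equiv\min\{m:\ n< m/(\log^{k-1}(m)\max(\sqrt{g(m)},1/\sqrt{\log m}))\}$ with $g(m)=\sup_{m'\geq m}|\rho_{k,m'}|m'/\log^{k-1}(m')$; your version is correct but would need that one line to be complete.
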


\begin{remark}
\normalfont The set of maxima of a set of vectors is usually called the Pareto-front of that set. Thus, the first part of Theorem \ref{thm:LLN+CLT} is an $L^2(P)$ version of a law of large numbers for the size of the Pareto-front. The second part of Theorem \ref{thm:LLN+CLT} might be interpreted as a CLT for the difference between the normalized size of the Pareto-front and a normalized size of a \textit{partial} Pareto-front. The third part provides a CLT for the size of a certain fraction of the Pareto-front. 
\end{remark}

\begin{remark}\normalfont
The second and third parts of Theorem \ref{thm:LLN+CLT} show a CLT for partial sums of the indicator variables $Z_{ni}=\indicator{E_{i,n}}$. Importantly, they cannot be deduced directly from the CLT given in \cite{bai2005maxima} for the entire sum $\sum\limits_{i=1}^n \indicator{E_{i,n}}$ (with $m_n=n$ for every $n\in\mathbb{N}$). It is also unclear if they can be used in order to deduce the CLT given in \cite{bai2005maxima}. 
\end{remark}

\subsection{Proofs}
\label{sec:proofs}

\subsubsection{Proof of Theorem \ref{thm:asymptotic independence}}
\begin{lemma} \label{lemma:exchangalble_monotone}
Let $W_1,..,W_n$ be exchangeable random variables such that $W_1\sim\text{Ber}(p)$ for some $p\in(0,1)$ and denote $W\equiv\sum\limits_{j=1}^nW_j$. Then, 
\be
\mathbb{E}(W | W_1 = 1) \geq n p. 
\ee
\end{lemma}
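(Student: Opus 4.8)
The plan is to prove the statement by exploiting exchangeability together with the tower property for conditional expectation. The key observation is that $\mathbb{E}(W \mid W_1 = 1)$ compares the conditional mean of the sum against its unconditional mean $\mathbb{E}(W) = np$, so the claim is equivalent to showing that conditioning on the event $\{W_1 = 1\}$ does not decrease the expected total. First I would expand $\mathbb{E}(W \mid W_1 = 1) = \sum_{j=1}^n \mathbb{E}(W_j \mid W_1 = 1)$ using linearity. The $j = 1$ term contributes exactly $1$ since $W_1 = 1$ on this event. For the remaining terms, exchangeability implies $\mathbb{E}(W_j \mid W_1 = 1) = P(W_j = 1 \mid W_1 = 1)$ is the same value $q$ for every $j \neq 1$, so the sum becomes $1 + (n-1)q$.

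The heart of the argument is then to show $1 + (n-1)q \geq np$, or equivalently $q \geq p$, i.e. that the $W_j$ are nonnegatively correlated in the pairwise sense $P(W_2 = 1 \mid W_1 = 1) \geq P(W_2 = 1) = p$. Here I would appeal to the positive-correlation structure; by exchangeability it suffices to handle the pair $(W_1, W_2)$. I would write $q = P(W_2 = 1 \mid W_1 = 1) = \frac{\mathbb{E}(W_1 W_2)}{p}$ and note that the desired inequality $q \geq p$ is precisely $\mathbb{E}(W_1 W_2) \geq p^2 = \mathbb{E}(W_1)\mathbb{E}(W_2)$, the statement that $W_1$ and $W_2$ are nonnegatively correlated.

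The main obstacle is justifying this nonnegative pairwise correlation, since for arbitrary exchangeable Bernoulli variables it need not hold. The resolution comes from the fact that this lemma is being applied to the indicators $Z_i = \indicator{E_{i,n}}$, whose nonnegative correlation is exactly what the correlation inequality in Theorem \ref{thm:dominate_conditioning_inequality} delivers. Concretely, I expect the lemma as stated to be invoked in a context where the relevant $W_j$ inherit a monotonicity or association property from the product-order structure, so that the inequality \eqref{eq:main inequality} forces $P(X_2 \in W \mid X_1 \in W) \leq P(X_2 \in W \mid X_1 \in S)$ and, after tracking the conditioning, yields $e_n \geq p_n^2$. Thus the clean conditional-expectation identity $\mathbb{E}(W \mid W_1 = 1) = 1 + (n-1)q$ reduces everything to the single correlation bound $q \geq p$, and the final step is to assemble $1 + (n-1)q \geq 1 + (n-1)p \geq np$, where the last inequality holds because $1 \geq p$.
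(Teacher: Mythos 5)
Your reduction of the claim to the single inequality $q \equiv P(W_2=1\mid W_1=1)\ge p$ is where the argument breaks down, and you correctly sense the danger yourself: for arbitrary exchangeable Bernoulli variables this pairwise nonnegative correlation is simply false. Take $W_1,\dots,W_n$ to be the indicators obtained by sampling without replacement from an urn containing exactly $m$ ones among $n$ balls; then $q=\frac{m-1}{n-1}<\frac{m}{n}=p$, yet the lemma still holds (with equality, since $W\equiv m=np$). This example shows two things at once: your intermediate claim $q\ge p$ is strictly stronger than the lemma and is not a consequence of its hypotheses, and therefore no proof can go through that route in the stated generality. The attempted rescue via the application context does not work either. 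The lemma is stated and must be proved for arbitrary exchangeable Bernoulli variables; moreover, in the paper the lemma is itself the tool used to obtain the \emph{lower} bound on $e_n/p_n^2$ (and that lower bound is $\frac{np_n-1}{p_n(n-1)}$, which is always strictly less than $1$, so even for the maxima indicators the exact inequality $e_n\ge p_n^2$ is never established). Theorem \ref{thm:dominate_conditioning_inequality} points in the opposite direction: it furnishes an \emph{upper} bound on $P(X_2\in W\mid X_1\in W)$ and is used for the upper bound on $e_n$. Invoking it here is both out of scope and circular.

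The fix is to condition the other way. By exchangeability, $P(W_1=1\mid W=i)=i/n$, so Bayes' rule gives
\begin{equation*}
\mathbb{E}(W\mid W_1=1)=\sum_{i=0}^n i\,\frac{P(W=i)}{p}\cdot\frac{i}{n}=\frac{\mathbb{E}(W^2)}{np}=\frac{\mathbb{E}(W^2)}{\mathbb{E}(W)}\ \ge\ \frac{[\mathbb{E}(W)]^2}{\mathbb{E}(W)}=np,
\end{equation*}
where the inequality is Cauchy--Schwarz (equivalently $\operatorname{Var}(W)\ge0$). This is the paper's argument; the slack term is $\operatorname{Var}(W)/\mathbb{E}(W)\ge0$, which is exactly what absorbs the possible negative pairwise correlation that defeats your decomposition $1+(n-1)q$.
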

\begin{proof}
Since $W_1,W_2,\ldots,W_n$ are exchangeable, then Bayes' theorem with the Cauchy-Schwartz inequality yields that
\begin{align}
\mathbb{E}(W | W_1 = 1) &= \sum_{i=0}^n P(W = i | W_1 = 1) i \nonumber \\
&= \sum_{i=0}^n \frac{P(W=i)}{P( W_1 = 1)} P(W_1 = 1 | W = i) i \nonumber \\
&= \frac{1}{p} \sum_{i=0}^n P(W = i) i^2 \nonumber \\
&= n\frac{\mathbb{E}(W^2)}{\mathbb{E}(W)} \geq \frac{[\mathbb{E}(W)]^2}{\mathbb{E}(W)} = \mathbb{E}(W) = np.
\end{align} \qed
\end{proof}

\subsubsection*{Proof (Theorem \ref{thm:asymptotic independence}):} \begin{enumerate}
 \item For every $i\in[n]$ and $j\in[k]$ let $X_{ij}$ be the $j$-th coordinate of the vector $X_i$. In addition, let $x_1,x_2\in[0,1]^k$ such that for every $i=1,2$, $x_i=(x_{i1},x_{i2},\ldots,x_{ik})$. Then, conditioning on the values of $X_1$ and $X_2$ yields that
\begin{align}
e_n&= \int_{[0,1]^{2k}}\indicatorpar{x_1\not\prec x_2,x_2\not\prec x_1} P \left( \bigcap_{i=3}^n \bigcap_{m=1}^2 \bigcup_{j=1}^k \{ X_{ij} > x_{mj} \}\right)dx_1dx_2 \nonumber \\ 
&= \int_{[0,1]^{2k}}\indicatorpar{x_1\not\prec x_2,x_2\not\prec x_1}\left[P \left( \bigcap_{m=1}^2 \bigcup_{j=1}^k \{ X_{3j} > x_{mj}\} \right)\right]^{n-2}dx_1dx_2\,.
\end{align}
The probabilities that appear inside the integral above are as follows:
\begin{align}
P \left( \bigcap_{m=1}^2 \bigcup_{j=1}^k \{ X_{3j} > x_{mj}\} \right) &= 1 - P \left(\bigcap_{j=1}^k \{X_{3j} < x_{1j}\}\right) - P \left(\bigcap_{j=1}^k \{X_{3j} < x_{2j}\}\right) \nonumber \\ 
&+ P \left(\bigcap_{j=1}^k \big\{X_{3j} < \min\{x_{1j}, x_{2j}\}\big\}\right) \nonumber \\ 
&= 1 - \prod_{j=1}^k x_{1j} - \prod_{j=1}^k x_{2j} + \prod_{j=1}^k \min\{x_{1j}, x_{2j}\}\,.
\end{align}

Thus, using the multinomial theorem deduce that
\begin{align} 
e_n &= \int_{[0,1]^{2k}} \Big(1 - \prod_{j=1}^k \indicatorpar{x_{1j}<x_{2j}}- \prod_{j=1}^k \indicatorpar{x_{1j}>x_{2j}}\Big) \nonumber \\ 
& \Big(1 - \prod_{j=1}^k x_{1j} - \prod_{j=1}^k x_{2j} + \prod_{j=1}^k \min\{x_{1j}, x_{2j}\}\Big)^{n-2} dx_1 dx_2 \nonumber \\
&= \int_{[0,1]^{2k}} \Big(1 - \prod_{j=1}^k \indicatorpar{x_{1j}<x_{2j}}- \prod_{j=1}^k \indicatorpar{x_{1j}>x_{2j}}\Big) \nonumber \\
&\sum\limits_{\substack{a,b,c,d\in\mathbb{Z}_+ : \\ a+b+c+d=n-2}}
(-1)^{a+b} \binom{n-2}{a\: b \: c \: d} \left(\prod_{j=1}^k x_{1j}\right)^a \left(\prod_{j=1}^k x_{2j}\right)^b \left(\prod_{j=1}^k \min\{x_{1j}, x_{2j}\}\right)^c dx_1 dx_2 \nonumber \\
&= \sum\limits_{\substack{a,b,c,d\in\mathbb{Z}_+ : \\ a+b+c+d=n-2}}
(-1)^{a+b} \binom{n-2}{a\: b \: c \: d} \Bigg[ 
\prod_{j=1}^k \int_{0}^1\int_0^1 x_{1j}^a x_{2j}^b \big(\min\{x_{1j}, x_{2j}\}\big)^c dx_{1j} dx_{2j} \nonumber \\
&- \prod_{j=1}^k \int_{0}^1\int_0^{x_{2j}} x_{1j}^a x_{2j}^b \big(\min\{x_{1j}, x_{2j}\}\big)^c dx_{1j} dx_{2j} \nonumber \\
&- \prod_{j=1}^k \int_0^1\int_{0}^{x_{1j}} x_{1j}^a x_{2j}^b \big(\min\{x_{1j}, x_{2j}\}\big)^c dx_{2j} dx_{1j} \Bigg] \nonumber \\
&= \sum\limits_{\substack{a,b,c,d\in\mathbb{Z}_+ : \\ a+b+c+d=n-2}}
(-1)^{a+b} \binom{n-2}{a\: b \: c \: d} \Bigg[ \left(\frac{a+b+2c+2}{(a+c+1)(b+c+1)(a+b+c+2)}\right)^k \nonumber \\
&- \left( \frac{1}{(a+c+1)(a+b+c+2)}\right)^k - \left( \frac{1}{(b+c+1)(a+b+c+2)} \right)^k \Bigg] \nonumber \\
&= \sum\limits_{\substack{a,b,c,d\in\mathbb{Z}_+ : \\ a+b+c+d=n-2}}
(-1)^{a+b} \binom{n-2}{a\: b \: c \: d} \frac{(a+b+2c+2)^k - (b+c+1)^k-(a+c+1)^k}{(a+c+1)^k(b+c+1)^k(a+b+c+2)^k} \ . 
\end{align}

\item The proof of Eq. \eqref{eq:e_k,n approx} will follow from the next squeezing argument. \begin{enumerate}
\item 
Applying Lemma \ref{lemma:exchangalble_monotone} to the indicator random variables $\indicator{E_{1,n}},\indicator{E_{2,n}},\ldots,\indicator{E_{n,n}}$ implies that 
\be
\mathbb{E}\left(\sum_{i=1}^n\indicator{E_{i,n}} \ \big|\ \indicator{E_{1,n}}=1\right) = 1 + (n-1) \frac{e_n}{p_n} \geq n p_n. 
\ee
As a result deduce that 
\be
\frac{e_n}{p^2_{k,n}} \geq \frac{n}{n-1}- \frac{1}{p_n(n-1)}\,. 
\ee
Thus, since $np_n\rightarrow\infty$ as $n\to\infty$, deduce that the lower bound tends to one as $n\to\infty$.
\item 
Next we prove that there exists an upper bound of $\frac{e_n}{p^2_{k,n}}$ which tends to one as $n\to\infty$.
To this end, observe that 
\begin{align}
e_n &= P\left(\bigcap_{i=1}^2 \bigcap_{j\in[n]\setminus\{i\}} \overline{B_{ij}}\right) \nonumber \\
&\leq P\left(\bigcap_{i=1}^2 \bigcap_{j=3}^n \overline{B_{ij}}\right) \nonumber \\
&= P\left(\bigcap_{j=3}^n \overline{B_{1j}}\right) P\left(\bigcap_{j=3}^n \overline{B_{2j}}\ \big|\ \bigcap_{j=3}^n \overline{B_{1j}}\right) \nonumber \\
&\leq p_{k,n-1} P\left(\bigcap_{j=3}^n \overline{B_{2j}}\ \big|\ \bigcap_{j=3}^n B_{j1}\right) \nonumber\\&=p_{k,n-1} \left[1-P\left(\bigcup_{j=3}^n B_{2j} \ |\ \bigcap_{j=3}^n B_{j1}\right)\right]\nonumber \\
&= p_{k,n-1} \left[1+\sum_{u=2}^{n-2} \binom{n-2}{u-1} (-1)^{u-1} P\left(\bigcap_{j=3}^{u+1} B_{2j}\ \big|\ \bigcap_{j=3}^n B_{j1}\right)\right]. 
\label{eq:e_k_n_upperbound}
\end{align}
The second inequality above follows from Theorem \ref{thm:dominate_conditioning_inequality}. The last two lines are due to De-Morgan's laws with an inclusion-exclusion principle.
The next step is to evaluate the probability $P(\bigcap\limits_{j=3}^{u+1} B_{2j} | \bigcap\limits_{j=3}^n B_{j1})$ for $u \geq 2$. The set $\{X_{ij}; i\in [n], j\in [k]\}$ contains i.i.d. random variables with a continuous distribution and hence symmetry considerations yield that
\begin{align}
P\left(\bigcap_{j=3}^{u+1} B_{2j} \ \big|\ \bigcap_{j=3}^n B_{j1}\right) &= 
P\left(\bigcap_{j=3}^{u+1} \bigcap_{l=1}^k \{X_{2l} \leq X_{jl} \} \ \big|\ \bigcap_{j=3}^n \bigcap_{l=1}^k \{X_{jl} \leq X_{1l} \} \right) \nonumber \\
&= \left[P\left(\bigcap_{j=3}^{u+1} \{X_{21} \leq X_{j1} \}\ \big|\ \bigcap_{j=3}^n \{X_{j1} \leq X_{11} \} \right)\right]^k \nonumber \\
&= \left\{\frac{P\left[\left(\bigcap\limits_{j=3}^{u+1} \{X_{21} \leq X_{j1}\}\right)\cap\left(\bigcap\limits_{j=3}^n \{X_{j1} \leq X_{11} \} \right)\right]}{P\left(\bigcap\limits_{j=3}^n \{X_{j1} \leq X_{11} \} \right)}\right\}^k \nonumber \\
&= \left(\frac{\frac{1}{n}\cdot\frac{1}{u}}{\frac{1}{n-1}} \cdot\right)^k\nonumber \\
&= \left(\frac{n-1}{nu}\right)^k\,.
\label{eq:cond_prob_upperbound_evaluation}
\end{align}
Plugging Eq. \eqref{eq:cond_prob_upperbound_evaluation} into Eq. \eqref{eq:e_k_n_upperbound} and using Eq. \eqref{eq:combinatorial formula} with the binomial theorem gives,
\begin{align}
\frac{e_n}{p^2_{k,n}} &\leq \frac{p_{k,n-1}}{p^2_{k,n}} \left[1+ \left(\frac{n-1}{n}\right)^k \sum_{u=2}^{n-2} \binom{n-2}{u-1} (-1)^{u-1} u^{-k} \right] \nonumber \\
&= \frac{p_{k,n-1}}{p^2_{k,n}} \left[ 1 + \left(\frac{n-1}{n}\right)^k (p_{k,n-1}-1) \right] \nonumber \\
&= \left(\frac{n-1}{n}\right)^k \left(\frac{p_{k,n-1}}{p_n}\right)^2 + \frac{1}{p_n}\left[1 - \left(1-\frac{1}{n}\right)^k\right] \frac{p_{k,n-1}}{p_n} \nonumber \\
&=\left(\frac{n-1}{n}\right)^k \left(\frac{p_{k,n-1}}{p_n}\right)^2 +\frac{p_{k,n-1}}{p_n}\cdot \frac{1}{np_n}\sum_{u=1}^k\binom{k}{u}\left(-\frac{1}{n}\right)^{u-1}\,.\nonumber
\end{align}
Finally, $np_n\rightarrow\infty$ and $p_{k,n-1}/p_n\rightarrow1$ as $n\to\infty$ imply that the upper bound above tends to one as $n\to\infty$. 
\end{enumerate}
Once we have established the proof of Eq. \eqref{eq:e_k,n approx}, it is possible to prove Eq. \eqref{eq:little o rho}. To this end, observe that

\be
 \lim_{n\to\infty}\frac{e_n-p_n^2}{\left[\frac{\log^{k-1}(n)}{n (k-1)!}\right]^2}=1-1=0
\ee
and hence
\be
 e_n-p_n^2=o\left(\left[\frac{\log^{k-1}(n)}{n (k-1)!}\right]^2\right)\ \ \text{as}\ \ n\to\infty\,.
\ee
Thus, since 
\be
 \rho_{k,n}=\frac{e_n-p_n^2}{p_n(1-p_n)}\,,
\ee
then
Eq. \eqref{eq:little o rho} follows.\qed
\end{enumerate}

\subsubsection{Proof of Theorem \ref{thm:LLN+CLT}}

\begin{lemma}\label{lemma: LLN}
If $np_{m_n}\rightarrow\infty$ as $n\to\infty$, then: 
 
 \be\label{eq:LLN2}
 \frac{1}{n}\sum_{i=1}^nU_{ni}^2\rightarrow1\ \ \text{as}\ \ n\to\infty\ \ \text{in}\ \ L^2(P)\,.
 \ee
\end{lemma}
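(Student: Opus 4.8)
The plan is to collapse the claim onto the asymptotic independence already proved in Theorem~\ref{thm:asymptotic independence}, using the fact that the $Z_{ni}$ are $0/1$ valued. Write $p\equiv p_{m_n}$ and $q\equiv 1-p_{m_n}$. The starting observation is that, since $Z_{ni}^2=Z_{ni}$, the square $U_{ni}^2$ is an \emph{affine} function of $Z_{ni}$: evaluating $U_{ni}^2=(Z_{ni}-p)^2/(pq)$ at $Z_{ni}\in\{0,1\}$ gives
\[
U_{ni}^2=\frac{p}{q}+\frac{1-2p}{pq}\,Z_{ni},
\]
so that $\frac1n\sum_{i=1}^n U_{ni}^2=\frac{p}{q}+\frac{1-2p}{pq}\,\bar Z_n$, where $\bar Z_n\equiv\frac1n\sum_{i=1}^n Z_{ni}$. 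Because each $U_{ni}$ is standardized, $\mathbb{E}\big(\frac1n\sum_{i=1}^n U_{ni}^2\big)=1$ exactly; hence $L^2(P)$ convergence to $1$ is equivalent to showing $\text{Var}\big(\frac1n\sum_{i=1}^n U_{ni}^2\big)\to0$.

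By the affine identity, $\text{Var}\big(\frac1n\sum_{i=1}^n U_{ni}^2\big)=\big(\frac{1-2p}{pq}\big)^2\text{Var}(\bar Z_n)$. The indicators $Z_{n1},\ldots,Z_{n,m_n}$ are exchangeable (being maximality indicators of i.i.d.\ vectors), so with $e_{m_n}\equiv P(E_{1,n}\cap E_{2,n})$ in the present $m_n$-point model,
\[
\text{Var}(\bar Z_n)=\frac1n\text{Var}(Z_{n1})+\frac{n-1}{n}\text{Cov}(Z_{n1},Z_{n2})=\frac{pq}{n}+\frac{n-1}{n}\big(e_{m_n}-p^2\big).
\]
Substituting and splitting, it remains to show that both $\dfrac{(1-2p)^2}{pq\,n}$ and $\dfrac{(1-2p)^2(n-1)}{n\,q^2}\cdot\dfrac{e_{m_n}-p^2}{p^2}$ tend to $0$.

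Since the sum over $i\in[n]$ presupposes $m_n\geq n$, the hypothesis $np_{m_n}\to\infty$ forces $m_n\to\infty$, whence $p\to0$ and $q\to1$ (note that for $k=1$ one has $p_{m_n}=1/m_n$ and $np_{m_n}\leq1$, so the hypothesis is vacuous there and only $k>1$ is relevant). The first term is bounded by $\frac{1}{q}\cdot\frac{1}{np_{m_n}}\to0$ by the hypothesis together with $q\to1$. In the second term the prefactor $(1-2p)^2(n-1)/(nq^2)$ is bounded, while $\frac{e_{m_n}-p^2}{p^2}=\frac{e_{m_n}}{p_{m_n}^2}-1\to0$ by Eq.~\eqref{eq:e_k,n approx} of Theorem~\ref{thm:asymptotic independence} applied to the $m_n$-point model (valid since $m_n\to\infty$ and $k>1$); the sign of $e_{m_n}-p^2$ is immaterial. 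Thus $\text{Var}\big(\frac1n\sum_{i=1}^n U_{ni}^2\big)\to0$, which completes the proof.

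The only substantive input is the asymptotic independence $e_{m_n}\sim p_{m_n}^2$ borrowed from Theorem~\ref{thm:asymptotic independence}; everything else is the algebraic collapse afforded by $Z_{ni}^2=Z_{ni}$ plus exchangeability. Accordingly, the step deserving the most care is not any estimate internal to this lemma but rather confirming that we are in the regime where that theorem applies — namely that $m_n\to\infty$ (so $p_{m_n}\to0$ and $q\to1$), and correspondingly that the degenerate $k=1$ case falls outside the hypothesis.
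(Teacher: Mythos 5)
Your proposal is correct and follows essentially the same route as the paper: both reduce the claim to showing $\mathrm{Var}\big(\frac{1}{n}\sum_{i=1}^n U_{ni}^2\big)\to 0$ via exchangeability, using the hypothesis $np_{m_n}\to\infty$ to kill the diagonal term and the asymptotic independence $e_{m_n}/p_{m_n}^2\to 1$ from Theorem~\ref{thm:asymptotic independence} to kill the cross term. Your affine reduction $U_{ni}^2=\frac{p}{q}+\frac{1-2p}{pq}Z_{ni}$ merely streamlines the paper's direct computation of $EU_{n1}^4$ and $EU_{n1}^2U_{n2}^2$, and your explicit remarks that $m_n\geq n$ forces $m_n\to\infty$ and that the $k=1$ case is vacuous are welcome additions.
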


\begin{proof}
By definition $U_{ni}^2=\frac{(Z_{ni}-p_{m_n})^2}{p_{m_n}(1-p_{m_n})}$ with $Z_{ni} \sim \text{Ber}(p_{m_n})$ and hence
\begin{align}
EU_{n1}^4 &= E \left[\frac{\left(Z_{n1}-p_{m_n}\right)^4}{p_{m_n}^2\left(1-p_{m_n}\right)^2} \right] \nonumber \\
&= \frac{p_{m_n}\left(1-p_{m_n}\right)^4+\left(1-p_{m_n}\right) p_{m_n}^4} {p_{m_n}^2\left(1-p_{m_n}\right)^2} \nonumber \\
&= \frac{(1-p_{m_n})^2}{p_{m_n}}+\frac{p_{m_n}^2}{1-p_{m_n}} \ .
\end{align}
Thus, the assumption that $\underset{n \to \infty}{\lim}np_{m_n}=\infty$ implies that
\be\label{4th moment1}
\lim_{n \to \infty} \frac{EU^4_{n1}}{n} = \lim_{n \to \infty} \left[ \frac{\left(1-p_{m_n}\right)^2}{np_{m_n}}+\frac{p_{m_n}^2}{n(1-p_{m_n})} \right] = 0 \ . 
\ee
Similarly, 
\begin{align}
EU_{n1}^2U_{n2}^2 &= E \left[\frac{\left(Z_{n1}-p_{m_n}\right)^2\left(Z_{n2}-p_{m_n}\right)^2}{p_{m_n}^2\left(1-p_{m_n}\right)^2} \right] \nonumber \\
&= \frac{E\left[\left(Z_{n1}\left(1-2p_{m_n}\right)+p_{m_n}^2\right)\left(Z_{n2}\left(1-2p_{m_n}\right)+p_{m_n}^2\right)\right]}{p_{m_n}^2\left(1-p_{m_n}\right)^2} \nonumber \\
&= \frac{\left(1-2 p_{m_n}\right)^2 e_{k,m_n} + 2p_{m_n}^3 \left(1-2p_{m_n}\right) + p_{m_n}^4}{p_{m_n}^2\left(1-p_{m_n}\right)^2} \nonumber \\
&= \left(\frac{1-2 p_{m_n}}{1-p_{m_n}}\right)^2 \frac{e_{k,m_n}}{p_{m_n}^2} + \frac{2p_{m_n} - 3p_{m_n}^2}{\left(1-p_{m_n}\right)^2} \ . \end{align}
Using Theorem \ref{thm:asymptotic independence} and recalling that $p_{m_n}\rightarrow0$ as $n\to\infty$, deduce that
\be\label{4th moment2}
\lim_{n \to \infty} EU_{n1}^2U_{n2}^2 = \lim_{n \to \infty} \frac{e_{k,m_n}}{p_{m_n}^2} = 1 \ .
\ee
In addition, standard calculations yield that
\begin{align}
\text{Var}\left(\frac{1}{n}\sum_{i=1}^n U^2_{ni}\right)&=\frac{\text{Var}(U^2_{n1})}{n}+\frac{n-1}{n}\text{Cov}\left(U_{n1}^2,U_{n2}^2\right)\\&=\frac{EU^4_{n1}-1}{n}+\frac{n-1}{n}\left(EU_{n1}^2U_{n2}^2-1\right)\nonumber
\end{align}
and hence Eqs. \eqref{4th moment1}, \eqref{4th moment2} imply that
\be
 \text{Var}\left(\frac{1}{n}\sum_{i=1}^n U^2_{ni}\right)\rightarrow0\ \ \text{as}\ \ n\to\infty\,.
\ee
Finally, the result follows since for each $n\geq1$, $E[\frac{1}{n}\sum\limits_{i=1}^n U^2_{ni}]=1$. 
\qed\newline
\end{proof}

\subsubsection*{Proof (Theorem \ref{thm:LLN+CLT}):} \begin{enumerate}
 \item For $k=1$, the proof is immediate. Assume that $k>1$ and $m_n=n$ for every $n\geq1$. Then by Eq. \eqref{eq:little o rho}, $n\rho_{k,m_n} = n \rho_{k,n} \rightarrow\infty$ as $n\to\infty$. Therefore, Lemma \ref{lemma: LLN} implies that
 
 \be
 \frac{1}{n}\sum_{i=1}^nU_{ni}^2\rightarrow1\ \ \text{as}\ \ n\to\infty\ \ \text{in}\ \ L^2(P)\,. 
 \ee
 Since $Z_{ni}^2=Z_{ni}$ for each $i \in [n]$, then 
 \be
 \frac{1}{n}\sum_{i=1}^nU_{ni}^2=\frac{(1-2p_n)\frac{1}{n}\sum_{i=1}^n Z_{ni}+p_n^2}{p_n(1-p_n)}\rightarrow1\ \ \text{as}\ \ n\to\infty\ \ \text{in}\ \ L^2(P)
 \ee
 and the result follows because $p_n\rightarrow0$ as $n\to\infty$. 
 \item $\left\{U_{ni};n\geq1, i\in [m_n]\right\}$ is a triangular array and by symmetry, for each $n\in \mathbb{N}$ the random variables $U_{n1},U_{n2},\ldots,U_{nm_n}$ are exchangeable. Moreover, by construction $EU_{ni}=0$ and $EU^2_{ni}=1$ for every $n\geq1$, $ i\in [n]$. Thus, in order to complete the proof, it is left to verify that the pre-conditions of Theorem 2 in \cite{weber1980martingale} are satisfied. To this end, recall that $\frac{n}{m_n}\rightarrow\gamma\in(0,1)$ as $n\to\infty$ and observe that 
 \be\label{eq:np_km to infinity}
 \lim_{n\to\infty}np_{m_n}=\lim_{n\to\infty}\frac{n}{m_n}\cdot\frac{\log^{k-1}(m_n)}{(k-1)!}=\infty\,.
 \ee

\begin{enumerate} 
 \item Eq. \eqref{eq:little o rho} yields that
 \be
 EU_{n1}U_{n2}=\rho_{k,m_n}\rightarrow0\ \ \text{as}\ \ n\to\infty\,.
 \ee
 
 \item Since $p_{m_n}\rightarrow0$ as $n\to\infty$, then $p_{m_n} < \frac{1}{2}$ for every sufficiently large $n\geq1$. Therefore, using Eq. \eqref{eq:np_km to infinity}, deduce that for every such $n$ 
\be\label{eq:precondition1}
n^{-\frac{1}{2}}\max_{1\leq i\leq m_n} |U_{ni}|\leq\frac{1-p_{m_n}}{\sqrt{n}\sqrt{p_{m_n}(1-p_{m_n})}}<\frac{1}{\sqrt{np_{m_n}}} 
\underset{n\to\infty}{\longrightarrow} 0\,. 
\ee 

\item Due to Eq. \eqref{eq:np_km to infinity}, it is possible to apply Lemma \ref{lemma: LLN} in order to deduce that 

\be
\frac{1}{n}\sum_{i=1}^n U_{ni}^2\rightarrow1\ \ \text{as}\ \ n\to\infty\ \ \text{in}\ \ L^2(P)\,.
\ee
This convergence also yields convergence in probability and the last pre-condition of Theorem 2 in \cite{weber1980martingale} follows. 
\end{enumerate}

\item 
Since $\rho_{k,n} = o\left(\frac{\log^{k-1} (n)}{n}\right)$, there exists a positive decreasing function $g(n)$ such that:
\begin{description}
 \item[(i)] $|\rho_{k,n}| \leq \frac{\log^{k-1} (n)}{n} g(n)$ for every $n\in\mathbb{N}$ large enough.
 
 \item[(ii)] $g(n)\downarrow0$ as $n\to\infty$.
\end{description}
(for example, take $g(n) \equiv \underset{n' \geq n}{\sup}\frac{|\rho_{k,n'}| n' }{\log^{k-1} (n')}$). 
Then, for each $n\in\mathbb{N}$, define
\be
 m_n\equiv\min\left\{m\in\mathbb{N}\ ;\ n < \frac{m}{\log^{k-1} (m)\max\big( \sqrt{g(m)}, 1 / \sqrt{\log m} \big) } \right\} 
\label{eq:n_vs_m_n}
\ee
and note that $m_n<\infty$. The proof will follow by verifying that the following four pre-conditions of Corollary 1 in \cite{weber1980martingale} are satisfied. 
\begin{enumerate}
\item First, \eqref{eq:n_vs_m_n} and the definition of $g(n)$ imply that for $n$ large enough,
\begin{align}
n |\rho_{k, m_n}| &\leq \frac{m_n}{\max\big( \sqrt{g(m_n)}, 1 / \sqrt{\log (m_n)} \big) \log^{k-1} (m_n) } \cdot \frac{\log^{k-1} (m_n)}{m_n} g(m_n) \nonumber \\
&= \min\big( \sqrt{g(m_n)}, g(m_n) \sqrt{\log (m_n)} \big) \underset{n\to\infty}{\longrightarrow} 0.
\end{align}

\item Next, the definition of $m_n$ yields that for every $n\geq1$,
\begin{equation}
 n \geq \frac{m_n-1}{\log^{k-1} (m_n-1)\max\big( \sqrt{g(m_n-1)}, 1 / \sqrt{\log (m_n-1)} \big) }\,.
\end{equation} 
In addition, the first order approximation \eqref{eq:p_n_first_order} implies that $p_{m_n} > \frac{1}{2} \frac{\log^{k-1} (m_n)}{(k-1)!m_n}$ for every sufficiently large $n$. Combining the two inequalities gives as $n\to\infty$:  
\begin{align}
n p_{m_n} &> \frac{m_n-1}{\log^{k-1} (m_n-1)\max\big( \sqrt{g(m_n-1)}, 1 / \sqrt{\log (m_n-1)} \big) } \cdot \frac{\log^{k-1} (m_n)}{2(k-1)!m_n} \nonumber \\
&\sim \frac{1}{2(k-1)! \max\big( \sqrt{g(m_n-1)}, 1 / \sqrt{\log (m_n-1)} \big) } \underset{n\to\infty}{\longrightarrow} \infty.
\label{eq:n_p_k_m_n_to_infty}
\end{align}

Therefore, in similar to Eq. \eqref{eq:precondition1}, 
\be
n^{-\frac{1}{2}}\max_{1\leq i\leq m_n} |U_{ni}| < \frac{1}{\sqrt{n p_{m_n}}} \underset{n\to\infty}{\longrightarrow} 0 . 
\ee

\item By Eq. \eqref{eq:n_p_k_m_n_to_infty}, $n p_{m_n} \underset{n\to\infty}{\longrightarrow} \infty$. Hence, Lemma \ref{lemma: LLN} yields that 
 
\be\label{eq:LLN3}
\frac{1}{n}\sum_{i=1}^nU_{ni}^2\rightarrow1\ \ \text{as}\ \ n\to\infty\ \ \text{in}\ \ L^2(P)
\ee
and this result implies convergence in probability. 

\item Finally, 
\be
\frac{n}{m_n} < \min\left[ \frac{1}{\sqrt{g(m_n)}\log^{k-1} (m_n)}\ ,\ \frac{1}{\log^{k-3/2} (m_n)}\right] 
\underset{n\to\infty}{\longrightarrow} 0.
\label{eq:n_m_n_ratio_to_zero}
\ee

\end{enumerate}
Since the above four conditions are satisfied, by Corollary 1 in \cite{weber1980martingale}, the Gaussian limit in Eq. \eqref{eq:CLT_partial_sum} holds. 
By condition \eqref{eq:n_m_n_ratio_to_zero} above, $n \ll m_n$. 
Because $g(m)<1$ for $m$ large enough, we get from Eq. \eqref{eq:n_vs_m_n} that $n > c \frac{m_n}{\log^{k-1} (m_n)}$ for some $c>0$. Therefore, for $m_n$ large enough,
\be
n \log^{k-1} n > c \frac{m_n}{\log^{k-1} (m_n)} \Big[\log c + \log m_n - \log\big(\log^{k-1}(m_n)\big)\Big]^{k-1} > c \frac{m_n \big(\frac{1}{2}\log (m_n)\big)^{k-1}}{\log^{k-1} (m_n)} = \frac{c}{2^{k-1}} m_n
\ee

which gives $m_n < c' n \log^{k-1} (n)$ for some $c'>0$ and hence $m_n \ll n \log^{k}(n)$, completing the proof of the theorem. 
 \qed
\end{enumerate}

\bibliographystyle{unsrt}
\bibliography{XYZ_DNF_CNF}

\end{document}